\def\be{\begin{equation}}   \def\ee{\end{equation}}
\def\ba   {\begin{array}}      \def\ea   {\end{array}}
\def\bea  {\begin{eqnarray}}   \def\eea  {\end{eqnarray}}
\def\bean {\begin{eqnarray*}}  \def\eean {\end{eqnarray*}}
\newtheorem{theorem} {Theorem}
\newtheorem{lemma}{Lemma}
\newtheorem{definition} {Definition}
\newtheorem{corollary} {Corollary}
\newtheorem{remark}{Remark}
\theoremstyle{definition}
\newcommand{\mN}{\ensuremath{\mathbb{N}}}
\newcommand{\mR}{\ensuremath{\mathbb{R}}}
\newcommand{\mT}{\ensuremath{\mathbb{T}}}
\newcommand{\cM}{\ensuremath{\mathcal{M}}}
\newcommand{\mM}{\ensuremath{\mathbb{M}}}
\newcommand{\cT}{\ensuremath{\mathcal{T}}}
\newcommand{\vspan}{\ensuremath{\mathrm{span}}}
\newcommand{\gm}[1]{\ensuremath{\gamma_{#1}}}
\newcommand{\gmc}[1]{\ensuremath{\overline{\gamma}_{#1}}}
\newcommand{\im}[1]{\ensuremath{\mathrm{\mathbf{i}}_{\mathbf{#1}}}}
\newcommand{\ibasic}[1]{\ensuremath{i_{#1}}}
\newcommand{\mI}{\ensuremath{\mathbb{I}}}
\newcommand{\mS}{\ensuremath{\mathbb{S}}}
\newcommand{\iter}{\ensuremath{\mathrm{It}}}
\title{Characterization of the Principal 3D Slices Related to the Multicomplex Mandelbrot Set}
\author{Guillaume Brouillette\thanks{E-mail: {\tt guillaume.brouillette@uqtr.ca}} }
\author{Dominic Rochon\thanks{E-mail: {\tt dominic.rochon@uqtr.ca}}}
\affil{Département de mathématiques et d'informatique, Université du Québec\\
 C.P. 500, Trois-Rivières, Québec, Canada, G9A 5H7.}
\date{\today}
\begin{document}
\maketitle

\begin{abstract}
This article focuses on the dynamics of the different tridimensional principal slices of the multicomplex Multibrot sets. First, we define an equivalence relation between those slices. Then, we characterize them in order to establish similarities between their behaviors. Finally, we see that any multicomplex tridimensional principal slice is equivalent to a tricomplex slice up to an affine transformation. This implies that, in the context of tridimensional principal slices, Multibrot sets do not need to be generalized beyond the tricomplex space.
\end{abstract}\vspace{0.5cm} 
\noindent\textbf{AMS subject classification:} 37F50, 32A30, 30G35, 00A69
\\
\textbf{Keywords:} Multicomplex Dynamics, Multibrot, Generalized Mandelbrot Sets, Metatronbrot, 3D Fractals, Tricomplex Space

\section*{Introduction}

The multicomplex space is one of the multiple generalizations of the complex space \cite{GarantPelletier, Baley, RochonMaitrise}. The set of multicomplex numbers is defined, essentially, by introducing more imaginary units and multicomplex addition and multiplication are analogous to the complex operations. Thus, working with multicomplex numbers is rather intuitive. Moreover, what makes this generalization interesting is that many results and concepts known in the complex space can be extended \cite{GarantRochon, RochonMaitrise, Rochon3, Rochon2, Holomorphy, vajiac}.

Here, we are mostly interested in multicomplex fractals. Indeed, some fractals originally defined in the complex plane can be generalized to the multicomplex space \cite{Martineau, RochonMartineau, Parise, Rochon1, Rochon2, Wang}. In particular, we are looking into multicomplex Multibrot sets, which are a generalization of the Mandelbrot set.

Furthermore, it will be seen later on that those objects have over three real dimensions, meaning that they do not have a graphical representation. Therefore, we can only partially visualize them by defining principal 3D slices. Since those fractals, and consequently their slices, are defined dynamically using polynomial iterations, it is essential to study multicomplex dynamics in order to understand them. The iteration of complex polynomials has been studied for many years \cite{Beardon, Douady, Milnor2}. However, the iteration of multicomplex polynomials has only been explored much more recently \cite{ RochonRansfordParise, RochonParise, RochonParise2}.

When slices have the same dynamics, they also look the same. This leads to defining an equivalence relation between them. This article establishes that, when it comes to principal 3D slices, any multicomplex principal 3D slice of a Multibrot set is equivalent to at least one quadricomplex slice or directly to one tricomplex slice up to an affine transformation. In other words, in that context, it is not necessary to explore principal 3D slices beyond the tricomplex space. Hence, the tricomplex space is, in a way, optimal.

The article goes as follows. In Section 1, multicomplex numbers are introduced. Then, in Section 2, we define multicomplex Multibrot sets and their principal 3D slices. In addition, we define a relation between those slices and prove that it is an equivalence relation. In Section 3, the dynamics of those slices are seen thoroughly. Finally, in Section 4, we show why the tricomplex space is optimal in this context.

\section{Multicomplex Numbers}

\subsection{Basic concepts}

We present here a short summary of the concepts on multicomplex numbers preliminary to the main results.

It is well known that a complex number is defined using two real components and an imaginary unit $\ibasic{1}$ such that $\ibasic{1}^2=-1$. Multicomplex numbers of order $n$, also called $n$-complex numbers, are obtained by using this idea recursively. Indeed, for any integer $n\geq 1$, the set of multicomplex numbers of order $n$ is defined as
\[\mM(n) := \{\eta_1 + \eta_2\ibasic{n} : \eta_1,\eta_2\in\mM(n-1)\}\]
with $\ibasic{n}^2 = -1$ and $\mM(0):=\mR$ \cite{GarantPelletier,GarantRochon}. Moreover, multicomplex addition and multiplication are defined similarly to the analogous complex operations, meaning that
\begin{align*}
&(\eta_1 + \eta_2\ibasic{n}) + (\zeta_1 + \zeta_2\ibasic{n}) = (\eta_1 + \zeta_1) + (\eta_2 + \zeta_2)\ibasic{n};\\
&(\eta_1 + \eta_2\ibasic{n})(\zeta_1 + \zeta_2\ibasic{n}) = (\eta_1\zeta_1 - \eta_2\zeta_2) + (\eta_1\zeta_2 + \eta_2\zeta_1)\ibasic{n}.
\end{align*}

Using these basic operations, we can see that any $n$-complex number may be expanded to $2^n$ terms with real coefficients \cite{GarantPelletier, Baley}. Each term then corresponds to a combination of imaginary units. For example, a bicomplex number $\eta$ may be expressed as
\begin{align*}
\eta = \eta_1 + \eta_2\ibasic{2} = x_1 + x_2\ibasic{1} + x_3\ibasic{2} + x_4\ibasic{1}\ibasic{2}
\end{align*}
assuming that $\eta_1 = x_1 + x_2\ibasic{1}$ and $\eta_2 = x_3 + x_4\ibasic{1}$. Let $\mI(n)$ be the set containing the unit 1 and all combinations of $\{\ibasic{1},\ibasic{2},...,\ibasic{n}\}$. For instance, $\mI(1) = \{ 1,\, \ibasic{1} \}$, $\mI(2) = \{ 1,\, \ibasic{1},\, \ibasic{2},\, \ibasic{1}\ibasic{2} \}$ and $\mI(3) = \{ 1,\, \ibasic{1},\, \ibasic{2},\, \ibasic{3},\, \ibasic{1}\ibasic{2},\, \ibasic{1}\ibasic{3},\, \ibasic{2}\ibasic{3},\, \ibasic{1}\ibasic{2}\ibasic{3} \}$. Generally, for all $\eta\in\mM(n)$, we have that
\[\eta = \sum_{\im{}\in\mI(n)}x_{\im{}}\im{}\]
where $x_{\im{}}\in\mR$ and $|\mI(n)| = 2^n$. Therefore, all numbers $\eta\in\mM(n)$ cannot be represented graphically when $n\geq 2$.

Notice that some units $\im{}\in\mI(n)$ are such that $\im{}^2 = 1$ with $\im{}\neq 1$. Those are called \textbf{hyperbolic} \cite{RochonShapiro, Sobczyk}. Using combinatorics, it could be proven that $\mI(n)$ contains $2^{n-1}$ complex imaginary units and $2^{n-1}-1$ hyperbolic units.

It can easily be verified that $(\mM(n),+,\cdot)$ is a commutative unitary ring. Moreover, the set $\mM(n)$ together with multicomplex addition and multiplication by real numbers is a vector space over the field $\mR$ and can be viewed as a direct sum of complex spaces. We can also define the norm $\lVert\cdot\rVert_n$ of a $n$-complex number as the Euclidean norm of its representation in $\mR^{2^n}$.

\subsection{Idempotent representation}\label{secIdempotent}

The last subsection gives a good general idea of what a multicomplex number is. Nonetheless, one more concept will be necessary later on. In the $n$-complex space, when $n\geq 2$, there exists idempotent numbers $\gamma$, meaning that $\gamma^2 = \gamma$. In particular, for $1\leq h < n$, consider
\[\gm{h} = \frac{1 + \ibasic{h}\ibasic{h+1}}{2} \quad\text{and}\quad \gmc{h} = \frac{1 - \ibasic{h}\ibasic{h+1}}{2}.\]
In addition, these two numbers are orthogonal, meaning that $\gm{h}\gmc{h} = 0$.

Given an arbitrary $n$-complex number $\eta = \eta_1 + \eta_2\ibasic{n}$, we can see that
\[\eta = (\eta_1 - \eta_2\ibasic{n-1})\gm{n-1} + (\eta_1 + \eta_2\ibasic{n-1})\gmc{n-1}.\]
This is called the \textbf{idempotent representation} of $\eta$. Using the properties of $\gm{n-1}$ and $\gmc{n-1}$, we can see that
\begin{enumerate}
\item $(\alpha_1\gm{n-1} + \alpha_2\gmc{n-1}) + (\beta_1\gm{n-1} + \beta_2\gmc{n-1}) = (\alpha_1 + \beta_1)\gm{n-1} + (\alpha_2 + \beta_2)\gmc{n-1}$;
\item $(\alpha_1\gm{n-1} + \alpha_2\gmc{n-1}) \cdot (\beta_1\gm{n-1} + \beta_2\gmc{n-1}) = (\alpha_1\beta_1)\gm{n-1} + (\alpha_2\beta_2)\gmc{n-1}$;
\item $(\alpha_1\gm{n-1} + \alpha_2\gmc{n-1})^p = \alpha_1^p\gm{n-1} + \alpha_2^p\gmc{n-1}\mbox{ } \forall p\in\mathbb{N}.$
\end{enumerate}

The idempotent components of any $\eta\in\mM(n)$ can also be written using the idempotent representation. In fact, we can expand $\eta$ until it is expressed with $2^{n-1}$ idempotent components. More explicitly, consider $S_h$ such that
\begin{align*}
S_h = \begin{cases}
\{\gm{1},\gmc{1}\} & \text{when } h = 1,\\
\gm{h}\cdot S_{h-1}\bigcup \gmc{h}\cdot S_{h-1} &\text{otherwise}.
\end{cases}
\end{align*}
Then, there exists $2^{n-1}$ numbers $\eta_{\gamma}$ such that
\[\eta = \sum_{\gamma\in S_{n-1}}\eta_{\gamma}\gamma.\]
Under this form, all components $\eta_{\gamma}$ are complex, namely $\eta_{\gamma}\in\mM(1)$.

Furthermore, an operation similar to the Cartesian product may be defined. Indeed, consider two sets $A,B\subseteq \mM(n-1)$. Then, we define the product $\times_{\gm{n-1}}$ as
\[A\times_{\gm{n-1}}B := \left\lbrace \eta\gm{n-1} + \zeta\gmc{n-1}\ |\ (\eta,\zeta)\in A\times B\right\rbrace.\]
We will see in the next sections that properties and results in $\mM(n-1)$ can be extended to $\mM(n)$ using this product.

\section{Generalized Mandelbrot Sets}

We present here an intuitive generalization of the complex Mandelbrot set to the multicomplex Multibrots sets.

Let $Q_{p,c}(\eta) = \eta^p + c$ and denote
\[Q^m_{p,c}(\eta) = \underbrace{\left(Q_{p,c}\circ Q_{p,c}\circ\dots\circ Q_{p,c}\right)}_{m\text{ times}}(\eta).\]
Using the function $Q_{p,c}$, we can define the classical Mandelbrot set as
\[\cM = \big\{ c\in\mM(1)\ :\ \{Q^m_{2,c}(0)\}_{m=1}^\infty\text{ is bounded } \big\}.\]
We can easily modify this last definition to obtain the following more general one.

\begin{definition}
Let $n,p\in\mN$ such that $p\geq 2$. The $n$-complex \textbf{Multibrot} set of order $p$ is defined as
\[\cM_n^p = \big\{ c\in\mM(n)\ :\ \{Q^m_{p,c}(0)\}_{m=1}^\infty\text{ is bounded } \big\}.\]
\end{definition}

Complex Multibrot sets are seen in many references (see \cite{Baribeau, Parise, RochonRansfordParise, RochonParise, RochonParise2} for example). Although they have been generalized up to the tricomplex space in some of those articles, their generalization to $n$-complex space has not often been seen in the literature. The specific case of $\cM_3^2$ is called the \textit{Metatronbrot}. Here is an interesting property of the multicomplex Multibrot sets based on the idempotent representation.

\begin{theorem}\label{theoMultibrotIdem}
Let $n,p\in\mN$ such that $n,p\geq 2$. We have that
\[\cM_n^p = \cM_{n-1}^p\times_{\gm{n-1}}\cM_{n-1}^p.\]
\end{theorem}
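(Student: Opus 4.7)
The plan is to prove the two inclusions by passing to the idempotent representation with respect to $\gm{n-1}$ and $\gmc{n-1}$, and then exploiting that iteration of $Q_{p,c}$ decouples into two independent iterations on the idempotent components.

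First, I would take an arbitrary $c\in\mM(n)$ and write it as $c = c_1\gm{n-1} + c_2\gmc{n-1}$ with $c_1, c_2 \in \mM(n-1)$, as guaranteed by the idempotent representation of Subsection~\ref{secIdempotent}. Then I would prove by induction on $m\geq 1$ that
\[Q^m_{p,c}(0) = Q^m_{p,c_1}(0)\,\gm{n-1} + Q^m_{p,c_2}(0)\,\gmc{n-1}.\]
The base case $m=1$ is immediate since $Q_{p,c}(0) = c = c_1\gm{n-1}+c_2\gmc{n-1}$. For the inductive step, I would apply property 3 of the idempotent representation to raise the iterate to the $p$-th power componentwise, then use property 1 to add $c$, yielding the desired form with $\eta_1^p + c_1 = Q_{p,c_1}(\eta_1)$ and $\eta_2^p + c_2 = Q_{p,c_2}(\eta_2)$ in the respective components.

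The remaining step is to transfer boundedness of the orbit $\{Q^m_{p,c}(0)\}$ in $\mM(n)$ to boundedness of both orbits $\{Q^m_{p,c_j}(0)\}$ in $\mM(n-1)$ for $j=1,2$, and conversely. Since $\gm{n-1}$ and $\gmc{n-1}$ are orthogonal idempotents, a direct expansion in the real basis associated with $\mI(n)$ shows that there exist constants $K_1,K_2>0$ such that
\[K_1\bigl(\lVert\alpha\rVert_{n-1} + \lVert\beta\rVert_{n-1}\bigr) \leq \lVert \alpha\gm{n-1} + \beta\gmc{n-1}\rVert_n \leq K_2\bigl(\lVert\alpha\rVert_{n-1} + \lVert\beta\rVert_{n-1}\bigr)\]
for every $\alpha,\beta\in\mM(n-1)$. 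This norm equivalence is the one technical point I expect to need; it is the main obstacle in the sense that it is the only place where one must really examine how the norm $\lVert\cdot\rVert_n$ decomposes under the idempotent splitting. It follows from writing $\gm{n-1}$ and $\gmc{n-1}$ in the canonical real basis and a direct computation.

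Combining the two steps, $\{Q^m_{p,c}(0)\}_{m=1}^\infty$ is bounded in $\mM(n)$ if and only if both $\{Q^m_{p,c_1}(0)\}_{m=1}^\infty$ and $\{Q^m_{p,c_2}(0)\}_{m=1}^\infty$ are bounded in $\mM(n-1)$, that is, if and only if $c_1,c_2\in\cM_{n-1}^p$. By the definition of the product $\times_{\gm{n-1}}$, this is precisely the statement $c\in\cM_{n-1}^p\times_{\gm{n-1}}\cM_{n-1}^p$, which yields both inclusions and hence the equality.
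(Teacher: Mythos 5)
Your proof follows essentially the same route as the paper's: write $c$ in its idempotent components, prove by induction on $m$ that $Q^m_{p,c}(0)$ decouples componentwise under the idempotent representation, and then deduce the boundedness equivalence from a norm argument. The only difference is a minor technical one: where you invoke a two-sided norm-equivalence inequality for $\lVert\alpha\gm{n-1}+\beta\gmc{n-1}\rVert_n$, the paper instead cites the exact identity $\lVert\eta\rVert_n = \sqrt{(\lVert\eta_{\gm{n-1}}\rVert_{n-1}^2 + \lVert\eta_{\gmc{n-1}}\rVert_{n-1}^2)/2}$ from the literature; either suffices to finish the argument.
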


\begin{proof}
This theorem is a generalization of results presented in \cite{GarantPelletier, GarantRochon, RochonParise}.

Essentially, the result follows from the properties of the idempotent representation. Let $c = c_{\gm{n-1}}\gm{n-1} + c_{\gmc{n-1}}\gmc{n-1}$. Then, using the induction principle on $m$, we can show that
\[Q^m_{p,c}(0) = Q^m_{p,c_{\gm{n-1}}}(0)\gm{n-1} + Q^m_{p,c_{\gmc{n-1}}}(0)\gmc{n-1}.\]
Furthermore, from \cite{Baley}, we know that
\[\lVert \eta \rVert_n = \sqrt{\frac{\lVert \eta_{\gm{n-1}} \rVert_{n-1}^2 + \lVert \eta_{\gmc{n-1}} \rVert_{n-1}^2}{2}}\quad \forall \eta = \eta_{\gm{n-1}}\gm{n-1} + \eta_{\gmc{n-1}}\gmc{n-1}\]
where $\lVert\cdot\rVert_n$ is the Euclidean norm of a $n$-complex number. Thus, we know that $\left\lbrace Q^m_{p,c}(0)\right\rbrace_{m=1}^\infty$ remains bounded if and only if both $\left\lbrace Q^m_{p,c_{\gm{n-1}}}(0)\right\rbrace_{m=1}^\infty$ and $\left\lbrace Q^m_{p,c_{\gmc{n-1}}}(0)\right\rbrace_{m=1}^\infty$ do as well. Hence, 
\begin{align*}
c\in\cM_n^p &\Leftrightarrow (c_{\gm{n-1}},c_{\gmc{n-1}})\in\cM_{n-1}^p\times\cM_{n-1}^p,\\
&\Leftrightarrow c_{\gm{n-1}}\gm{n-1} + c_{\gmc{n-1}}\gmc{n-1}\in\cM_{n-1}^p\times_{\gm{n-1}}\cM_{n-1}^p.\qedhere
\end{align*}
\end{proof}

\begin{corollary}\label{coroMultibrotIdem}
Let $n,p\in\mN$ such that $n\geq 2$ and $p\geq 2$. Consider $c\in\mM(n)$ such that
\[c = \sum_{\gamma\in S_{n-1}}c_{\gamma}\gamma.\]
We have that
\[c\in\cM_n^p\Leftrightarrow c_{\gamma}\in\cM^p\ \forall \gamma\in S_{n-1}.\]
\end{corollary}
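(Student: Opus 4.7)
The plan is a straightforward induction on $n\geq 2$, using Theorem \ref{theoMultibrotIdem} as the workhorse for the inductive step and the recursive definition of $S_{n-1}$ to pass from a two-component decomposition to a $2^{n-1}$-component one.

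For the base case $n=2$, the set $S_1=\{\gm{1},\gmc{1}\}$, so the decomposition $c=c_{\gm{1}}\gm{1}+c_{\gmc{1}}\gmc{1}$ has coefficients in $\mM(1)$, and Theorem \ref{theoMultibrotIdem} directly gives $c\in\cM_2^p$ iff $c_{\gm{1}},c_{\gmc{1}}\in\cM_1^p=\cM^p$.

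For the inductive step, I would assume the result for order $n-1$ and consider $c\in\mM(n)$. Writing $c=c_{\gm{n-1}}\gm{n-1}+c_{\gmc{n-1}}\gmc{n-1}$ with idempotent components in $\mM(n-1)$, Theorem \ref{theoMultibrotIdem} yields
\[c\in\cM_n^p \iff c_{\gm{n-1}}\in\cM_{n-1}^p \text{ and } c_{\gmc{n-1}}\in\cM_{n-1}^p.\]
Next, I would apply the induction hypothesis to each of $c_{\gm{n-1}}$ and $c_{\gmc{n-1}}$, obtaining that each is in $\cM_{n-1}^p$ iff all of its $2^{n-2}$ complex-valued idempotent components (indexed by $S_{n-2}$) lie in $\cM^p$. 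Finally, invoking the recursive definition $S_{n-1}=\gm{n-1}\cdot S_{n-2}\cup\gmc{n-1}\cdot S_{n-2}$ and the uniqueness of the expansion in $\{\gm{n-1},\gmc{n-1}\}$, the two families of components assemble precisely into the family $\{c_\gamma\}_{\gamma\in S_{n-1}}$ of the full idempotent representation of $c$. The conjunction of the two equivalences then reads $c\in\cM_n^p$ iff $c_\gamma\in\cM^p$ for every $\gamma\in S_{n-1}$.

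The only point that requires a little care is the \emph{uniqueness} of the expansion $c=\sum_{\gamma\in S_{n-1}}c_\gamma\gamma$, so that the complex numbers produced by iterating Theorem \ref{theoMultibrotIdem} really agree with the $c_\gamma$ named in the statement. I expect this to be the main (if mild) obstacle: it amounts to showing that $\{\gamma : \gamma\in S_{n-1}\}$ is a basis of $\mM(n)$ as a free module over $\mM(1)$, which follows inductively from the orthogonality relations $\gm{h}\gmc{h}=0$, $\gm{h}^2=\gm{h}$, $\gmc{h}^2=\gmc{h}$ recalled in Subsection \ref{secIdempotent}. Everything else is bookkeeping on the recursive definition of $S_{n-1}$ and a pair of biconditionals.
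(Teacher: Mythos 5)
Your proof is correct and follows essentially the same route as the paper: induction on $n$ with Theorem~\ref{theoMultibrotIdem} supplying both the base case and the inductive step via the two-component idempotent splitting. You supply more detail than the paper's terse two-line chain of biconditionals — in particular you flag the (mild) uniqueness-of-expansion point that the paper leaves implicit — but the underlying argument is the same.
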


\begin{proof}
The proof is done using the induction principle. When $n=2$, we see the proposition is true using Theorem \ref{theoMultibrotIdem}. Then, assuming the proposition is true for some value $n-1\geq 2$, it follows from Theorem \ref{theoMultibrotIdem} that
\begin{align*}
c\in\cM_n^p &\Leftrightarrow c_{\gm{n-1}},c_{\gmc{n-1}}\in\cM_{n-1}^p \\
&\Leftrightarrow c_{\gamma}\in\cM^p\ \forall \gamma\in S_{n-1}.\qedhere
\end{align*}
\end{proof}

As $\cM_n^p$ is a subset of a $2^n$-dimensional space, it cannot be represented in a graph when $n\geq 2$. Therefore, the Multibrot sets can only be partially visualized by extracting 3D slices. The next definitions are generalizations of definitions in \cite{GarantPelletier, GarantRochon, Parise, RochonParise, RochonParise2}.
	
\begin{definition}
Let $\im{m},\im{k},\im{l}\in\mI(n)$ with $\im{m} \neq \im{k}$, $\im{m} \neq \im{l}$ and $\im{k} \neq \im{l}$. We define the following vector subspace of $\mM(n)$: 
\begin{align*}
\mT(\im{m},\im{k},\im{l}) &:= \vspan_{\mR} \{\im{m},\im{k},\im{l}\}.
\end{align*}
\end{definition}

\begin{remark}
It is important to remember that $\im{m},\im{k},\im{l}\in\mI(n)$ are not necessarily complex imaginary units. They could be real, complex or hyperbolic units.
\end{remark}

\begin{remark}
The notation $\vspan_{\mR}$ stands for the linear space spanned by some vectors over the field $\mR$. Equivalently, it stands for the space of all finite linear combinations of those vectors.
\end{remark}
	
\begin{definition}
Let $\im{m},\im{k},\im{l}\in\mI(n)$ with $\im{m} \neq \im{k}$, $\im{m} \neq \im{l}$ and $\im{k} \neq \im{l}$. We define a principal 3D slice of the Multibrot set $\cM_n^p$ as
\begin{align*}
\cT^p (\im{m} , \im{k} , \im{l} ) = \left\lbrace c \in \mT(\im{m},\im{k},\im{l}) : \left\lbrace Q_{p,c}^m (0) \right\rbrace_{m=1}^{\infty} \text{ is bounded} \right\rbrace.
\end{align*}
\end{definition}

\begin{remark}
When the context is clear, we write $\cT^p$ instead of $\cT^p(\im{m}, \im{k}, \im{l})$.
\end{remark}

A relation between the principal 3D slices of the Multibrot sets may be defined. An important subspace must be presented beforehand.

\begin{definition}
Let $\im{m},\im{k},\im{l}\in\mI(n)$ with $\im{m} \neq \im{k}$, $\im{m} \neq \im{l}$ and $\im{k} \neq \im{l}$. Then,
\begin{align*}
\iter^p(\im{m},\im{k},\im{l}) &:= \vspan_{\mR}\{Q_{p,c}^m(0) : c\in\mT(\im{m},\im{k},\im{l})\text{ and } m\in\mN\}.
\end{align*}
\end{definition}

In other words, the subspace $\iter^p(\im{m},\im{k},\im{l})$ is the smallest subspace of $\mM(n)$ containing all iterates $Q_{p,c}^m(0)$ with $c\in\mT(\im{m},\im{k},\im{l})$. This concept is used to define the following relation between principal 3D slices. In \cite{Parise}, the next definition is presented specifically for the tricomplex case.

\begin{definition}\label{defEquivRel}
Let $\cT_1^p(\im{m},\im{k},\im{l})$ and $\cT_2^p(\im{r},\im{q},\im{s})$ be two principal 3D slices of $\cM_n^p$. Consider the sets $M_1=\iter^p(\im{m},\im{k},\im{l})$ and $M_2=\iter^p(\im{r},\im{q},\im{s})$. We say that $\cT_1^p \sim \cT_2^p$ if there exists a linear bijective application $\varphi : M_1 \rightarrow M_2$ such that $\varphi\left(\mT(\im{m},\im{k},\im{l})\right)=\mT(\im{r},\im{q},\im{s})$ and, for all $c\in\mT(\im{m},\im{k},\im{l})$,
\begin{align*}
(\varphi \circ Q_{p,c} \circ \varphi^{-1} ) (\eta ) = Q_{p, \varphi(c)} (\eta)\ \forall \eta \in M_2.
\end{align*}
In this case, we say that $\cT_1^p$ and $\cT_2^p$ have the same dynamics.
\end{definition}

\begin{remark}
To lighten the text, whenever we consider units $\im{m},\im{k},\im{l}\in\mI(n)$, we assume that $\im{m} \neq \im{k}$, $\im{m} \neq \im{l}$ and $\im{k} \neq \im{l}$. Analogously, we always assume $\im{r} \neq \im{q}$, $\im{r} \neq \im{s}$ and $\im{q} \neq \im{s}$ when considering units $\im{r},\im{q},\im{s}\in\mI(n)$.
\end{remark}
	
In \cite{RochonParise}, the authors defined a similar but slightly different relation between the principal 3D slices of the Multibrot sets. We believe this definition is more accurate. Moreover, it is a generalization of Definition 3.8 presented in \cite{Parise} for the tricomplex case. To prove this statement, we will need a result from linear algebra. It is a consequence of the rank-nullity Theorem.

\begin{lemma}\label{lemRankNullity}(See \cite{LinearAlgebra}.)
Let $V,W$ be two vector spaces and $L:V\rightarrow W$ be a linear application. If $\dim(V) = \dim(W) < \infty$, then
\[L \text{ is injective } \Leftrightarrow L\text{ is surjective } \Leftrightarrow L \text{ is bijective.}\]
\end{lemma}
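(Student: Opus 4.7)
The plan is to reduce both equivalences to the rank--nullity theorem, which states that $\dim(V) = \dim(\ker L) + \dim(\mathrm{Im}\, L)$. Since $L$ is linear, injectivity is equivalent to $\ker L = \{0\}$, i.e.\ $\dim(\ker L) = 0$, so rank--nullity turns injectivity into the single numerical condition $\dim(\mathrm{Im}\, L) = \dim(V)$.

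Next I would translate surjectivity into the same numerical condition. Surjectivity is $\mathrm{Im}\, L = W$, which certainly implies $\dim(\mathrm{Im}\, L) = \dim(W)$. The converse uses the elementary but essential fact that a subspace of a finite-dimensional space having the same dimension as the ambient space must equal the ambient space; this is where the finite-dimensionality hypothesis enters crucially. Combined with $\dim(V) = \dim(W)$, surjectivity is therefore equivalent to $\dim(\mathrm{Im}\, L) = \dim(W) = \dim(V)$, which matches exactly the condition obtained from injectivity. Hence injectivity $\Leftrightarrow$ surjectivity, and bijectivity---being their conjunction---is equivalent to either one.

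The argument is essentially textbook linear algebra, so no genuine obstacle arises; the only subtlety is that the hypothesis $\dim(V) = \dim(W) < \infty$ is invoked in two distinct places---once to apply rank--nullity on the $V$ side, and once to promote an equality of dimensions to an equality of subspaces on the $W$ side. Without finite-dimensionality, injectivity and surjectivity of a linear map need not coincide (as the unilateral shifts on sequence spaces demonstrate), so neither use of the hypothesis can be dispensed with. This is why, when the lemma is later applied to the application $\varphi$ of Definition~\ref{defEquivRel}, one must first check that the relevant iteration subspaces $\iter^p(\im{m},\im{k},\im{l})$ are finite-dimensional and of equal dimension before concluding bijectivity from injectivity alone.
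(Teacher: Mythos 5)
Your proof is correct and is exactly the standard rank--nullity argument that the cited linear-algebra reference would give; the paper itself offers no proof here, stating the lemma with a citation only. Your closing observation about why the hypothesis is invoked twice, and about the need to check finite-dimensionality before applying the lemma to $\varphi$, is a fair and accurate reading of how the lemma is used later in the paper.
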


\begin{theorem}
When $n = 3$, Definition \ref{defEquivRel} is equivalent to Definition 3.8 in \cite{Parise}.
\end{theorem}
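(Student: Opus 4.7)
The plan is to unpack both definitions in the tricomplex setting and show they impose the same constraints on the pair of 3D slices. The key preparatory step is to identify $\iter^p(\im{m},\im{k},\im{l})$ explicitly in $\mM(3)$ and compare it to whatever ambient subspace is implicit in the earlier tricomplex definition.

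First, I would determine the structure of $\iter^p(\im{m},\im{k},\im{l})$ for arbitrary units $\im{m},\im{k},\im{l} \in \mI(3)$. Since $Q_{p,c}^1(0)=c$ lies in $\mT(\im{m},\im{k},\im{l})$, the iteration space contains $\mT(\im{m},\im{k},\im{l})$, and iterating $Q_{p,c}$ further only introduces a controlled list of products of the three units, because $\mI(3)$ is finite and the squares of the units are $\pm 1$. A case analysis on the possible triples in $\mI(3)$ then yields a concrete finite basis for $\iter^p(\im{m},\im{k},\im{l})$, which I expect to coincide exactly with the ambient subspace of $\mM(3)$ used in Definition 3.8 of \cite{Parise}.

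Second, having matched the ambient subspaces, I would compare the two conditions. Both definitions ask for a linear bijection between appropriate subspaces that sends $\mT(\im{m},\im{k},\im{l})$ to $\mT(\im{r},\im{q},\im{s})$ and satisfies the conjugation relation $\varphi \circ Q_{p,c} \circ \varphi^{-1} = Q_{p,\varphi(c)}$. For the implication from Parise's formulation to Definition \ref{defEquivRel}, I would restrict $\varphi$: the conjugation identity forces $\varphi$ to send iterates of $c$ to iterates of $\varphi(c)$, hence $\varphi\bigl(\iter^p(\im{m},\im{k},\im{l})\bigr) \subseteq \iter^p(\im{r},\im{q},\im{s})$; Lemma \ref{lemRankNullity} then promotes this restriction to a bijection once we know both iteration subspaces share the same finite dimension. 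For the converse, a $\varphi$ satisfying Definition \ref{defEquivRel} is already defined on precisely the ambient space required by Parise's definition, by the first step, so the conjugation identity transfers directly and no extension is needed.

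The main obstacle will be the explicit determination of $\iter^p(\im{m},\im{k},\im{l})$ inside $\mM(3)$: I must verify, by inspection of the multiplication table of $\mI(3)$ across the relevant triples, that iterating $Q_{p,c}$ never escapes a specific low-dimensional subspace, and that this subspace agrees in every case with the one used in the tricomplex formulation. The argument itself is finite and mechanical, but the bookkeeping across the possible triples of complex and hyperbolic units is where care is required.
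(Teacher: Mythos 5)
Your proposal misses the actual content of the theorem because it misidentifies where the two definitions differ. The paper's proof observes that the ambient spaces $M_1,M_2 = \iter^p(\cdot,\cdot,\cdot)$ and the conjugation condition $\varphi\circ Q_{p,c}\circ\varphi^{-1} = Q_{p,\varphi(c)}$ are stated identically in both definitions; the \emph{only} discrepancy is in the condition on the 3D subspaces. Definition~\ref{defEquivRel} requires the exact equality $\varphi\bigl(\mT(\im{m},\im{k},\im{l})\bigr)=\mT(\im{r},\im{q},\im{s})$, whereas Parise's Definition~3.8 requires only the surjectivity-type condition: for every $c_2\in\mT(\im{r},\im{q},\im{s})$ there exists $c_1\in\mT(\im{m},\im{k},\im{l})$ with $\varphi(c_1)=c_2$, i.e.\ $\mT(\im{r},\im{q},\im{s})\subseteq\varphi\bigl(\mT(\im{m},\im{k},\im{l})\bigr)$. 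The theorem is precisely the claim that these two conditions are equivalent given that $\varphi$ is a linear bijection. Your proposal explicitly asserts that ``both definitions ask for a linear bijection \ldots that sends $\mT(\im{m},\im{k},\im{l})$ to $\mT(\im{r},\im{q},\im{s})$,'' which is the very thing that must be proved, not assumed.

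Because of that misreading, the bulk of your proposal is aimed at a non-issue (matching the ambient iteration subspaces across the two definitions), and the step that actually carries the proof is absent. The genuine argument is short: the nontrivial direction is to show Parise's condition implies the exact equality. Since $\varphi$ is bijective, the inclusion $\mT(\im{r},\im{q},\im{s})\subseteq\varphi\bigl(\mT(\im{m},\im{k},\im{l})\bigr)$ rewrites as $\varphi^{-1}\bigl(\mT(\im{r},\im{q},\im{s})\bigr)\subseteq\mT(\im{m},\im{k},\im{l})$; the restriction of $\varphi^{-1}$ to $\mT(\im{r},\im{q},\im{s})$ is an injective linear map between three-dimensional spaces, so Lemma~\ref{lemRankNullity} makes it a bijection onto $\mT(\im{m},\im{k},\im{l})$, whence $\varphi\bigl(\mT(\im{m},\im{k},\im{l})\bigr)=\mT(\im{r},\im{q},\im{s})$. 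You do invoke Lemma~\ref{lemRankNullity}, but in the wrong place (to relate the iteration subspaces rather than the $\mT$-subspaces), and your proposed case analysis of $\iter^p$ over triples in $\mI(3)$ is both unnecessary here and not part of what this theorem is about.
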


\begin{proof}
Consider Definition \ref{defEquivRel} assuming that $n=3$. We see that there is one main difference between both definitions:
\begin{enumerate}[label=(\arabic*)]
	\item in Definition \ref{defEquivRel}, we assume $\varphi$ is such that
	\[\varphi\left(\mT(\im{m},\im{k},\im{l})\right)=\mT(\im{r},\im{q},\im{s});\]
	\item in Definition 3.8 in \cite{Parise}, it is rather said that \[\forall c_2\in\mT(\im{r},\im{q},\im{s}),\, \exists c_1\in\mT(\im{m},\im{k},\im{l})\text{ such that }\varphi(c_1) = c_2.\]
\end{enumerate}
We can prove that $(1)\Leftrightarrow (2)$.

\begin{itemize}
	\item [$\Rightarrow)$] If $\varphi\left(\mT(\im{m},\im{k},\im{l})\right)=\mT(\im{r},\im{q},\im{s})$, then for all $c_2\in\mT(\im{r},\im{q},\im{s})$, we have directly that there exists a value $c_1\in\mT(\im{m},\im{k},\im{l})$ such that $\varphi(c_1) = c_2$.
	\item [$\Leftarrow)$] Now, suppose that for all $c_2\in\mT(\im{r},\im{q},\im{s})$, there exists $c_1\in\mT(\im{m},\im{k},\im{l})$ such that $\varphi(c_1) = c_2$. As $\varphi$ is bijective, we know that $\varphi^{-1}$ exists. The previous statement can therefore be written
	\[\varphi^{-1}(c_2) = c_1\in\mT(\im{m},\im{k},\im{l})\text{ for all } c_2\in\mT(\im{r},\im{q},\im{s}),\]
	meaning that $\varphi^{-1}\left(\mT(\im{r},\im{s},\im{q})\right)\subseteq\mT(\im{m},\im{k},\im{l})$. Moreover, we know that the restriction of the linear application $\varphi^{-1}$ to $\mT(\im{r},\im{s},\im{q})$ is injective. Then, Lemma \ref{lemRankNullity} implies that $\varphi^{-1}$ forms a bijection from $\mT(\im{r},\im{s},\im{q})$ to $\mT(\im{m},\im{k},\im{l})$. Hence, we conclude that $\varphi^{-1}\left(\mT(\im{r},\im{s},\im{q})\right)=\mT(\im{m},\im{k},\im{l})$ and, consequently, $\varphi\left(\mT(\im{m},\im{k},\im{l})\right)=\mT(\im{r},\im{s},\im{q})$.\qedhere	
\end{itemize}
\end{proof}

Furthermore, it has been proven in \cite{Parise} that this relation is an equivalence relation in the tricomplex space. The proof of this statement can be generalized to the multicomplex space.

\begin{theorem}
The relation $\sim$ from Definition \ref{defEquivRel} is an equivalence relation.
\end{theorem}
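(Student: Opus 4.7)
The plan is to verify the three defining properties of an equivalence relation — reflexivity, symmetry, transitivity — by constructing the appropriate linear bijection in each case and checking the conjugation identity for $Q_{p,c}$.

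For \emph{reflexivity}, given any principal 3D slice $\cT^p(\im{m},\im{k},\im{l})$, I would take $\varphi = \mathrm{id}$ on $M = \iter^p(\im{m},\im{k},\im{l})$. It is trivially linear and bijective, maps $\mT(\im{m},\im{k},\im{l})$ onto itself, and satisfies $\mathrm{id}\circ Q_{p,c}\circ\mathrm{id} = Q_{p,c} = Q_{p,\mathrm{id}(c)}$ on all of $M$. So $\cT^p\sim\cT^p$.

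For \emph{symmetry}, suppose $\cT_1^p\sim \cT_2^p$ via some $\varphi: M_1\to M_2$ as in Definition \ref{defEquivRel}. I propose to use $\psi := \varphi^{-1}: M_2\to M_1$, which is linear and bijective. Since $\varphi$ maps $\mT(\im{m},\im{k},\im{l})$ onto $\mT(\im{r},\im{q},\im{s})$ bijectively, $\psi$ maps $\mT(\im{r},\im{q},\im{s})$ onto $\mT(\im{m},\im{k},\im{l})$. For the conjugation identity, starting from $\varphi\circ Q_{p,c}\circ\varphi^{-1} = Q_{p,\varphi(c)}$ on $M_2$, I would compose on the left with $\varphi^{-1}$ and on the right with $\varphi$, obtaining $Q_{p,c} = \varphi^{-1}\circ Q_{p,\varphi(c)}\circ\varphi$ on $M_1$. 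Setting $c' := \varphi(c)\in\mT(\im{r},\im{q},\im{s})$ (so $c = \psi(c')$), this rearranges to $\psi\circ Q_{p,c'}\circ\psi^{-1} = Q_{p,\psi(c')}$ on $M_1$, as required.

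For \emph{transitivity}, suppose $\cT_1^p\sim\cT_2^p$ via $\varphi_1:M_1\to M_2$ and $\cT_2^p\sim\cT_3^p$ via $\varphi_2:M_2\to M_3$. I would take $\varphi := \varphi_2\circ\varphi_1:M_1\to M_3$. It is linear and bijective as a composition of linear bijections, and it sends $\mT(\im{m},\im{k},\im{l})$ onto $\mT$ of the third slice by the two hypotheses chained together. The conjugation identity then follows by a direct two-step computation:
\begin{align*}
\varphi\circ Q_{p,c}\circ \varphi^{-1}
&= \varphi_2\circ\bigl(\varphi_1\circ Q_{p,c}\circ \varphi_1^{-1}\bigr)\circ \varphi_2^{-1} \\
&= \varphi_2\circ Q_{p,\varphi_1(c)}\circ \varphi_2^{-1} = Q_{p,\varphi_2(\varphi_1(c))} = Q_{p,\varphi(c)}.
\end{align*}

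The arguments are essentially bookkeeping, so no step is genuinely hard; the only mild pitfall is being careful that the conjugation identity in the symmetric case is verified on the correct ambient space $M_1$ (rather than $M_2$), and that in the transitive case the composition $\varphi_2\circ\varphi_1$ inherits the bijectivity of $\mT$-to-$\mT$ from each factor. Both are handled by the straightforward set-theoretic argument above.
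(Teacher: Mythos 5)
Your proof is correct and follows essentially the same route as the paper: identity map for reflexivity, $\varphi^{-1}$ for symmetry (with the conjugation identity rearranged onto $M_1$ exactly as the paper does with the substitution $c_1 = \varphi^{-1}(c_2)$), and the composition $\varphi_2\circ\varphi_1$ for transitivity. No material differences.
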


\begin{proof}
We need to prove that $\sim$ is reflexive, symmetric and transitive.
\begin{itemize}
	\item Let $\cT^p(\im{m},\im{k},\im{l})$ be a principal 3D slice and consider $M = \iter^p(\im{m},\im{k},\im{l})$. It is easy to see that $\cT^p\sim\cT^p$ by using the identity application $\varphi(\eta) = \eta$.
	
	\item Let $\cT^p_1(\im{m},\im{k},\im{l})$ and $\cT^p_2(\im{r},\im{q},\im{s})$ be two principal 3D slices and consider the sets $M_1 = \iter^p(\im{m},\im{k},\im{l})$ and $M_2 = \iter^p(\im{r},\im{q},\im{s})$. If $\cT^p_1\sim\cT^p_2$, then there exists a bijective linear application $\varphi:M_1\rightarrow M_2$ such that $\varphi\left(\mT(\im{m},\im{k},\im{l})\right)=\mT(\im{r},\im{q},\im{s})$ and, for all $c_1\in\mT(\im{m},\im{k},\im{l})$,
	\begin{align*}
	(\varphi \circ Q_{p,c_1} \circ \varphi^{-1}) = Q_{p, \varphi(c_1)}.
	\end{align*}
	Hence, we have that $\varphi^{-1}\left(\mT(\im{r},\im{s},\im{q})\right)=\mT(\im{m},\im{k},\im{l})$. In addition, let $c_2\in\mT(\im{r},\im{q},\im{s})$. Using the equality above with $c_1 = \varphi^{-1}(c_2)$, we find that
	\begin{align*}
	\left(\varphi^{-1} \circ Q_{p,c_2} \circ \varphi\right) &= \left(\varphi^{-1} \circ (\varphi \circ Q_{p,\varphi^{-1}(c_2)} \circ \varphi^{-1} ) \circ \varphi\right) = Q_{p,\varphi^{-1}(c_2)}.
	\end{align*}
	Therefore, we found a suitable bijective linear application $\varphi^{-1}$ to conclude that $\cT^p_2\sim\cT^p_1$.
	
	\item Let $\cT^p_1(\im{m},\im{k},\im{l})$, $\cT^p_2(\im{r},\im{q},\im{s})$ and $\cT^p_3(\im{t},\im{u},\im{v})$ be three principal 3D slices such that $\cT^p_1\sim\cT_2$ and $\cT^p_2\sim\cT^p_3$. In addition, consider the sets $M_1~=~\iter^p(\im{m},\im{k},\im{l})$, $M_2 = \iter^p(\im{r},\im{q},\im{s})$ and $M_3 = \iter^p(\im{t},\im{u},\im{v})$. We know that there exists bijective linear applications $\varphi_1:M_1\rightarrow M_2$ and $\varphi_2:M_2\rightarrow M_3$ which are conform to the hypotheses in Definition \ref{defEquivRel}. Let $\Phi:M_1\rightarrow M_3$ such that $\Phi = \varphi_2\circ\varphi_1$. We can see that
	\begin{align*}	
	\Phi\left(\mT(\im{m},\im{k},\im{l})\right) &= \varphi_2\left(\varphi_1\left(\mT(\im{m},\im{k},\im{l})\right)\right),\\
	&= \varphi_2\left(\mT(\im{r},\im{q},\im{s})\right),\\
	&= \mT(\im{t},\im{u},\im{v}).
	\end{align*}
	Moreover, we have that, for all $c\in\mT(\im{m},\im{k},\im{l})$,
	\begin{align*}
	(\Phi\circ Q_{p,c}\circ\Phi^{-1}) &= (\varphi_2\circ\varphi_1\circ Q_{p,c}\circ\varphi_1^{-1}\circ\varphi_2^{-1}),\\
	&= (\varphi_2\circ Q_{p,\varphi_1(c)}\circ\varphi_2^{-1}),\\
	&= Q_{p,\varphi_2(\varphi_1(c))} = Q_{p,\Phi(c)}.
	\end{align*}
	Thus, we conclude that $\cT^p_1\sim\cT^p_3$.\qedhere
\end{itemize}
\end{proof}

\section{Characterization of the 3D Slices}

It is possible to determine the nature of $\iter^p(\im{m},\im{k},\im{l})$. By doing this, it will become easier to find similarities between the associated principal 3D slices.

\begin{lemma}\label{lemCharacterization}
Let $\im{m},\im{k},\im{l}\in\mI(n)$. We define the following vector subspaces of $\mM(n)$:
\begin{align*}
\mM(\im{m},\im{k},\im{l}) &= \vspan_{\mR}\left\lbrace \im{m},\, \im{k},\, \im{l},\, \im{m}\im{k}\im{l} \right\rbrace\!;\\
\mS(\im{m},\im{k},\im{l}) &= \vspan_{\mR}\{1,\, \im{m},\, \im{k},\,\im{l},\, \im{m}\im{k},\, \im{m}\im{l},\, \im{k}\im{l},\, \im{m}\im{k}\im{l}\}.
\end{align*}
The subspaces $\mM(\im{m},\im{k},\im{l})$ and $\mS(\im{m},\im{k},\im{l})$ are closed under the addition. Moreover, we have that
\begin{enumerate}
\item the subspace $\mM(1,\im{k},\im{l})$ is closed under the multiplication;
\item the subspace $\mS(\im{m},\im{k},\im{l})$ is also closed under the multiplication;
\item when $p$ is odd, if $\eta\in\mM(\im{m},\im{k},\im{l})$, then $\eta^p\in\mM(\im{m},\im{k},\im{l})$.
\end{enumerate}
\end{lemma}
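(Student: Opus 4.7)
\medskip
\noindent\emph{Proof proposal.} My plan rests on two structural facts about multicomplex multiplication: it is commutative (as mentioned in the preliminaries, $\mM(n)$ is a commutative ring), and every unit $\im{}\in\mI(n)$ satisfies $\im{}^{\,2}\in\{-1,+1\}$, where the sign is determined by whether $\im{}$ is complex or hyperbolic (with $1^2=1$ for the real unit). Closure under addition for both $\mM(\im{m},\im{k},\im{l})$ and $\mS(\im{m},\im{k},\im{l})$ is immediate since both are defined as $\mR$-linear spans. For closure under multiplication and for the power statement, it suffices to control how the generating units multiply, and then invoke bilinearity.

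For item (1), I would show that $\mM(1,\im{k},\im{l})=\vspan_{\mR}\{1,\im{k},\im{l},\im{k}\im{l}\}$ is closed under multiplication by checking that the product of any two of the four generators is a real scalar multiple of a generator. Using commutativity together with $\im{k}^{\,2},\im{l}^{\,2}\in\{\pm 1\}$, one gets for instance $\im{k}\cdot\im{k}\im{l}=\im{k}^{\,2}\im{l}=\pm\im{l}$ and $(\im{k}\im{l})^{2}=\im{k}^{\,2}\im{l}^{\,2}=\pm 1$, and analogously for every remaining pair. Bilinearity then extends closure from generators to the whole subspace. For item (2), the same strategy applies to the eight generators of $\mS(\im{m},\im{k},\im{l})$: any pairwise product reduces, by commutativity and the squaring rule, to $\pm$ one of those eight generators (for example $(\im{m}\im{k})(\im{m}\im{l})=\im{m}^{\,2}\im{k}\im{l}=\pm\im{k}\im{l}$ and $(\im{m}\im{k})(\im{k}\im{l})=\im{m}\im{k}^{\,2}\im{l}=\pm\im{m}\im{l}$). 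Verifying the full product table is the most tedious step, but purely mechanical.

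For item (3), the key observation is that $\mM(\im{m},\im{k},\im{l})\subseteq\mS(\im{m},\im{k},\im{l})$, and $\mS(\im{m},\im{k},\im{l})$ admits a natural $\mZ/2$-grading: to each generating monomial I associate the parity of the number of units from $\{\im{m},\im{k},\im{l}\}$ appearing in it. The even part is $\vspan_{\mR}\{1,\im{m}\im{k},\im{m}\im{l},\im{k}\im{l}\}$ and the odd part is exactly $\mM(\im{m},\im{k},\im{l})=\vspan_{\mR}\{\im{m},\im{k},\im{l},\im{m}\im{k}\im{l}\}$. The verification in item (2) shows that multiplying two generators yields $\pm$ a generator whose parity equals the sum, modulo $2$, of the two input parities, so this grading is compatible with multiplication. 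Given $\eta\in\mM(\im{m},\im{k},\im{l})$, I would expand $\eta^{p}$ by the multinomial theorem (permissible because of commutativity); each term is a product of $p$ odd-parity generators, hence has parity $p\equiv 1\pmod{2}$ for odd $p$, and therefore lies in the odd part $\mM(\im{m},\im{k},\im{l})$. Summing the terms gives $\eta^{p}\in\mM(\im{m},\im{k},\im{l})$.

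I do not expect a serious obstacle: the only conceptual step is setting up the $\mZ/2$-grading in item (3), after which the result follows formally. The rest is careful bookkeeping of signs, which the statement does not care about since it only asserts membership in $\mR$-spans.
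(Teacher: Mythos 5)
Your proof is correct and is best described as a cleaner abstraction of the paper's computation rather than a fundamentally new route. Where the paper establishes items (1) and (2) by writing out the product of two arbitrary elements coordinate by coordinate and verifying the result lies in the span, you check the product table of the generators only and extend by bilinearity; this is the same content with much less bookkeeping, and it hinges on the observation (which you state) that every $\im{}\in\mI(n)$ satisfies $\im{}^2\in\{-1,1\}$, which is exactly what forces the product table to close up to signs. For item (3), the paper implicitly uses the same $\mZ/2$ decomposition you describe: it introduces $\mM(\im{m}\im{k},\im{m}\im{l},\im{k}\im{l})$, which, since $(\im{m}\im{k})(\im{m}\im{l})(\im{k}\im{l})=\pm 1$, coincides with your even part $\vspan_{\mR}\{1,\im{m}\im{k},\im{m}\im{l},\im{k}\im{l}\}$; it then shows $\eta\cdot\mu$ lands in the even part and $\eta\cdot\nu$ lands back in the odd part $\mM(\im{m},\im{k},\im{l})$, and finishes by induction on odd $p$. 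You package the same facts as a grading compatible with the product and conclude by the multinomial expansion (or equivalently by the same parity induction). Both are valid; your version makes the structural reason transparent, while the paper's explicit expansions are more self-contained for a reader who does not want to set up the grading. There is no gap.
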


\begin{proof}
As vector spaces, the closure of $\mM(\im{m},\im{k},\im{l})$ and $\mS(\im{m},\im{k},\im{l})$ under the addition is obvious. The other three statements may be verified algebraically.

\begin{enumerate}
	\item Consider $\eta,\zeta\in\mM(1,\im{k},\im{l})$ such that
	\begin{align*}
	\eta = x_1 + x_2\im{k} + x_3\im{l} + x_4\im{k}\im{l}\quad\text{and}\quad \zeta = y_1 + y_2\im{k} + y_3\im{l} + y_4\im{k}\im{l}.
	\end{align*}
	We find that $\eta\cdot\zeta\in\mM(1,\im{k},\im{l})$ since
	\begin{align*}
	\eta\cdot\zeta &= (x_1 + x_2\im{k} + x_3\im{l} + x_4\im{k}\im{l})(y_1 + y_2\im{k} + y_3\im{l} + y_4\im{k}\im{l}),\\
	&= x_1y_1 + x_2y_2\im{k}^2 + x_3y_3\im{l}^2 + x_4y_4\im{k}^2\im{l}^2\\
	&\quad + (x_1y_2 + x_2y_1 + x_3y_4\im{l}^2 + x_4y_3\im{l}^2)\im{k}\\
	&\quad + (x_1y_3 + x_3y_1 + x_2y_4\im{k}^2 + x_4y_2\im{k}^2)\im{l}\\
	&\quad + (x_2y_3 + x_3y_2 + x_1y_4 + x_4y_1)\im{k}\im{l}.
	\end{align*}

	\item For all $\eta,\zeta\in \mS(\im{m},\im{k},\im{l})$, it can be shown that $\eta\cdot\zeta\in \mS(\im{m},\im{k},\im{l})$ similarly to the previous case.

	\item Now, let $\eta\in\mM(\im{m},\im{k},\im{l})$ such that
	\[\eta = x_1\im{m} + x_2\im{k} + x_3\im{l} + x_4\im{m}\im{k}\im{l}\]
	and consider $\mu\in\mM(\im{m},\im{k},\im{l})$ and $\nu\in\mM(\im{m}\im{k}, \im{m}\im{l}, \im{k}\im{l})$ such that
	\[\mu = y_1\im{m} + y_2\im{k} + y_3\im{l} + y_4\im{m}\im{k}\im{l}\quad\text{and}\quad \nu = w_1 + w_2\im{m}\im{k} + w_3\im{m}\im{l} + w_4\im{k}\im{l}.\]

	Notice that
	\begin{align*}
	\eta\cdot\mu &= x_1y_1\im{m}^2 + x_2y_2\im{k}^2 + x_3y_3\im{l}^2 + x_4y_4\im{m}^2\im{k}^2\im{l}^2\\
	&\quad + (x_1y_2 + x_2y_1 + x_3y_4\im{l}^2 + x_4y_3\im{l}^2)\im{m}\im{k}\\
	&\quad + (x_1y_3 + x_3y_1 + x_2y_4\im{k}^2 + x_4y_2\im{k}^2)\im{m}\im{l}\\
	&\quad + (x_2y_3 + x_3y_2 + x_1y_4\im{m}^2 + x_4y_1\im{m}^2)\im{k}\im{l}
	\end{align*}
	and
	\begin{align*}
	\eta\cdot\nu &= (x_1w_1 + x_2w_2\im{k}^2 + x_3w_3\im{l}^2 + x_4w_4\im{k}^2\im{l}^2)\im{m}\\
	&\quad + (x_2w_1 + x_1w_2\im{m}^2 + x_3w_4\im{l}^2 + x_4w_3\im{m}^2\im{l}^2)\im{k}\\
	&\quad + (x_3w_1 + x_1w_3\im{m}^2 + x_2w_4\im{k}^2 + x_4w_2\im{m}^2\im{k}^2)\im{l}\\
	&\quad + (x_4w_1 + x_1w_4 + x_2w_3 + x_3w_2)\im{m}\im{k}\im{l}.
	\end{align*}
	meaning that $\eta\cdot\mu\in\mM(\im{m}\im{k},\im{m}\im{l},\im{k}\im{l})$ and $\eta\cdot\nu\in\mM(\im{m},\im{k},\im{l})$. Using these two arguments, we see that
	\[\eta^2 = \eta\cdot\eta\in\mM(\im{m}\im{k},\im{m}\im{l},\im{k}\im{l})\quad\text{and}\quad \eta^3=\eta\cdot\eta^2\in\mM(\im{m},\im{k},\im{l}).\]
	Then, it can be verified that $\eta^p\in\mM(\im{m},\im{k},\im{l})$ when $p$ is odd using the induction principle. \qedhere
\end{enumerate}
\end{proof}

\begin{lemma}\label{lemDimensions}
Let $\im{m},\im{k},\im{l}\in\mI(n)$ and consider $M = \iter^p(\im{m},\im{k},\im{l})$. Then,
\begin{enumerate}
	\item if $p$ is even and $\im{m} = 1$ or $\im{k}\im{l} = \pm \im{m}$, then $\dim(M) \geq 4$;
	\item if $p$ is even but $\im{m} \neq 1$ and $\im{k}\im{l} \neq \pm\im{m}$, then $\dim(M) \geq 8$;
	\item if $p$ is odd, then $\dim(M)\geq 4$.
\end{enumerate}
\end{lemma}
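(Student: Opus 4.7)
My plan is, for each of the three cases in the statement, to exhibit enough linearly independent iterates $Q^m_{p,c}(0)$ (for suitable $c\in\mT(\im{m},\im{k},\im{l})$) to attain the claimed dimension. Observe first that $Q_{p,c}(0)=c$, so $\mT(\im{m},\im{k},\im{l})\subseteq M$ and $\dim(M)\geq 3$ always, already furnishing three independent directions. For case (3), Lemma~\ref{lemCharacterization}(3) confines every iterate to $\mM(\im{m},\im{k},\im{l})$, so it suffices to add a single direction, namely $\im{m}\im{k}\im{l}$. Taking $c=x_1\im{m}+x_2\im{k}+x_3\im{l}$ and applying the multinomial theorem, the $\im{m}\im{k}\im{l}$-coefficient of $c^p$ equals
\[\sum_{\substack{a+b+d=p\\ a,b,d\text{ odd}}}\binom{p}{a,b,d}\,\epsilon(a,b,d)\,x_1^a x_2^b x_3^d,\]
where $\epsilon(a,b,d)=\pm 1$ collects the signs from $\im{m}^2,\im{k}^2,\im{l}^2\in\{\pm 1\}$. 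Since distinct admissible triples give distinct monomials, this polynomial in $(x_1,x_2,x_3)$ is not identically zero, so some choice of $c$ makes $Q^2_{p,c}(0)=c^p+c$ contribute a nonzero $\im{m}\im{k}\im{l}$ component.

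For case (1), I would treat the two sub-hypotheses separately. If $\im{m}=1$, Lemma~\ref{lemCharacterization}(1) confines every iterate to $\mM(1,\im{k},\im{l})=\vspan_{\mR}\{1,\im{k},\im{l},\im{k}\im{l}\}$, and the same multinomial idea---now applied to the $\im{k}\im{l}$-coefficient of $c^p$, summed over partitions $(a,b,d)$ with $a$ even and $b,d$ odd---again produces a nontrivial polynomial, yielding the missing fourth direction. If instead $\im{k}\im{l}=\pm\im{m}$, the derived identities $\im{m}\im{k}=\pm\im{l}$, $\im{m}\im{l}=\pm\im{k}$, $\im{m}\im{k}\im{l}=\pm 1$ collapse $\mS(\im{m},\im{k},\im{l})$ to the four-dimensional space $\vspan_{\mR}\{1,\im{m},\im{k},\im{l}\}$, and the iterate $c=\im{m}$ yields $c^p=(\im{m}^2)^{p/2}\in\vspan_{\mR}\{1\}$, which together with $\im{m},\im{k},\im{l}$ fills this space.

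Case (2) is the substantive one and is where I expect the main difficulty. Under its hypotheses, the eight products $\{1,\im{m},\im{k},\im{l},\im{m}\im{k},\im{m}\im{l},\im{k}\im{l},\im{m}\im{k}\im{l}\}$ remain pairwise distinct elements of $\pm\mI(n)$---precisely because $\im{m}\neq 1$ and $\im{k}\im{l}\neq\pm\im{m}$ rule out all possible coincidences---and hence are linearly independent in $\mM(n)$. Taking $c=x_1\im{m}+x_2\im{k}+x_3\im{l}$, the iterate $c$ contributes $\im{m},\im{k},\im{l}$, and expanding $c^2$ adds the constant term (from $\im{m}^2,\im{k}^2,\im{l}^2$) together with the cross terms $\im{m}\im{k},\im{m}\im{l},\im{k}\im{l}$. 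A direct check shows $\vspan_{\mR}\{1,\im{m}\im{k},\im{m}\im{l},\im{k}\im{l}\}$ is closed under multiplication, so $c^p=(c^2)^{p/2}$ remains in this subspace; thus $Q^2_{p,c}(0)$ reaches at most seven of the eight directions. To hit the eighth direction $\im{m}\im{k}\im{l}$, I would expand $Q^3_{p,c}(0)=(c^p+c)^p+c$ binomially into the terms $\binom{p}{i}c^{p+i(p-1)}$ for $0\leq i\leq p$. When $i$ is odd the exponent $p+i(p-1)$ is odd, so Lemma~\ref{lemCharacterization}(3) places those terms in $\mM(\im{m},\im{k},\im{l})$, where they may carry an $\im{m}\im{k}\im{l}$ component. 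The hard part is ruling out cancellation among these odd-$i$ contributions. This is handled by a degree argument: each odd-$i$ term is homogeneous of degree $p+i(p-1)$ in $(x_1,x_2,x_3)$, and different values of $i$ produce different degrees, so the contributions are linearly independent as polynomials. The $i=1$ summand is $p$ times the $\im{m}\im{k}\im{l}$-coefficient of $c^{2p-1}$, which is nontrivial by the same multinomial reasoning as in case (3). Hence the total $\im{m}\im{k}\im{l}$-coefficient of $Q^3_{p,c}(0)$ is nonzero for some $c$, yielding $\dim(M)\geq 8$.
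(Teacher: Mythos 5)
Your cases 1 and 3 are sound and take a somewhat different route from the paper. Where the paper fixes a specific constant such as $c_1 = a_1\im{k} + \im{l}$, expands $(a_1\im{k}+\im{l})^p$ by the binomial theorem, and argues via root-counting that some real $a_1$ makes both coefficients nonzero, you instead work with a fully general $c = x_1\im{m}+x_2\im{k}+x_3\im{l}$ and observe that the target coefficient of $c^p$ is a nonzero polynomial in $(x_1,x_2,x_3)$ because the multinomial terms produce distinct monomials. That is a clean and slightly more uniform reformulation of the same idea. Your split of case~1 into the sub-cases $\im{m}=1$ and $\im{k}\im{l}=\pm\im{m}$ is also correct, and your treatment of the second sub-case (using $c=\im{m}$ to produce $\pm 1 + \im{m}$) is essentially what the paper does in case~2 for the real direction.

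Case 2 has a genuine gap. You correctly show that $c^p=(c^2)^{p/2}\in\vspan_{\mR}\{1,\im{m}\im{k},\im{m}\im{l},\im{k}\im{l}\}$, and your degree argument for the $\im{m}\im{k}\im{l}$-direction is nice --- indeed it justifies the step the paper states without proof (that $Q^3_{p,c_0}(0)$ has nonzero $\im{m}\im{k}\im{l}$-component). But you never establish that the four directions $1,\ \im{m}\im{k},\ \im{m}\im{l},\ \im{k}\im{l}$ are actually reached by elements of $M$. The sentence ``expanding $c^2$ adds the constant term \ldots together with the cross terms'' refers to $c^2$, which is not an iterate of $Q_{p,c}$, and passing to $c^p=(c^2)^{p/2}$ only gives membership of $c^p$ in that $4$-dimensional subspace, not that its four coordinates are nonzero, and certainly not that the map $c\mapsto c^p$ has a $4$-dimensional image. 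You acknowledge this yourself with the phrase ``reaches \emph{at most} seven of the eight directions,'' but then go straight to the eighth. Altogether you have exhibited only four independent vectors ($\im{m},\im{k},\im{l}$, and an iterate with nonzero $\im{m}\im{k}\im{l}$-component), which gives $\dim(M)\geq 4$, not $\geq 8$. The paper avoids this by using several constants --- $c_1=a_1\im{k}+\im{l}$, $c_2=a_2\im{m}+\im{k}$, $c_3=a_3\im{m}+\im{l}$, and $c=\im{m}$ --- so that $Q^2_{p,c_1}(0),Q^2_{p,c_2}(0),Q^2_{p,c_3}(0),Q^2_{p,\im{m}}(0)$ each contribute one new component ($\im{k}\im{l}$, $\im{m}\im{k}$, $\im{m}\im{l}$, $1$ respectively), making the independence of all eight vectors transparent. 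You could patch your argument similarly, or alternatively prove that the four coefficient-polynomials of $c\mapsto c^p$ are nonzero and that the image of this map spans the full $4$-dimensional subspace, but as written the step is missing.
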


\begin{remark}
When one of the units $\im{m}$, $\im{k}$ or $\im{l}$ is 1, we suppose without loss of generality that $\im{m}=1$. Thus, $\im{m}\neq 1$ implies that none of the three units is 1. Therefore, the three cases above cover all possible subspaces $\iter^p(\im{m},\im{k},\im{l})$.
\end{remark}

\begin{proof}
Consider the vector space $M = \iter^p(\im{m},\im{k},\im{l})$. Basically, we have to find four or eight linearly independent vectors, depending on the case considered. In general, we have the three vectors $Q_{p,\im{m}}(0) = \im{m}$, $Q_{p,\im{k}}(0) = \im{k}$ and $Q_{p,\im{l}}(0) = \im{l}$. Then, we have to consider each case separately.
\begin{enumerate}
	\item When $p$ is even and $\im{m} = 1$ or $\im{k}\im{l} = \pm \im{m}$, consider $c_1 = a_1\im{k}+\im{l}$ with $a_1\in\mR^*$. It is possible to find some $a_1\in\mR^*$ such that
	\[(a_1\im{k}+\im{l})^p = x_{11} + x_{12}\im{k}\im{l}\]
	with non-zero values $x_{11},x_{12}\in\mR^*$. Indeed, using the binomial Theorem, we can calculate that
	\begin{align*}
	(a_1\im{k}+\im{l})^p &= \sum_{j=0}^p \binom{p}{j}a_1^j\im{k}^j\im{l}^{p-j},\\
	&= \sum_{j=0}^{\frac{p}{2}}\binom{p}{2j}a_1^{2j}(\im{k}^2)^j(\im{l}^2)^{\frac{p}{2}-j}\\
	&\quad + \im{k}\im{l}\sum_{j=0}^{\frac{p}{2}-1}\binom{p}{2j+1}a_1^{2j+1}(\im{k}^2)^j(\im{l}^2)^{\frac{p}{2}-j-1}.
	\end{align*}
	One may notice that these last two sums are real polynomials in $a_1$ which have degrees of $p$ and $p-1$ respectively. Hence, there are at most $2p-1$ values of $a_1$ such that one of the two sums is zero. This implies that there exists an infinite number of real values $a_1$ such that both sums are non-zero.
	
	Thus, consider $a_1\in\mR$ such that $c_1^p = (a_1\im{k}+\im{l})^p = x_{11} + x_{12}\im{k}\im{l}$ where $x_{11},x_{12}\in\mR^*$ are non-zero. We have that
	\[Q^2_{p,c_1}(0) = c_1^p + c_1 = x_{11} + a_1\im{k} + \im{l} + x_{12}\im{k}\im{l}.\]
	Because the values $Q_{p,\im{m}}(0)$, $Q_{p,\im{k}}(0)$, $Q_{p,\im{l}}(0)$ and $Q^2_{p,c_1}(0)$ are four linearly independent vectors of $M$, it follows that $\dim(M)\geq 4$.
	
	\item When $p$ is even but $\im{m} \neq 1$ and $\im{k}\im{l} \neq \pm\im{m}$, consider the constant $c_1$ above as well as $c_2 = a_2\im{m}+\im{k}$ and $c_3 = a_3\im{m}+\im{l}$ where $a_2,a_3\in\mR^*$ are such that $c_2^p = x_{21} + x_{22}\im{m}\im{k}$ and $c_3^p = x_{31} + x_{32}\im{m}\im{l}$ with non-zero values $x_{21},x_{22},x_{31},x_{32}\in\mR^*$. We can make sure that such constants exist by using a reasoning similar to the one used previously for $c_1$. Then, we have that
	\begin{align*}
	Q^2_{p,\im{m}}(0) &= \pm 1 + \im{m},\\
	Q^2_{p,c_1}(0) &= x_{11} + a_1\im{k} + \im{l} + x_{12}\im{k}\im{l},\\
	Q^2_{p,c_2}(0) &= x_{21} + a_2\im{m} + \im{k} + x_{22}\im{m}\im{k},\\
	Q^2_{p,c_3}(0) &= x_{31} + a_3\im{m} + \im{l} + x_{32}\im{m}\im{l}.
	\end{align*}
	Let $c_0 = \im{m} + \im{k} + \im{l}$. Then, we see that
	\[Q^3_{p,c_0}(0) = y_1 + y_2\im{m} + y_3\im{k} + y_4\im{l} + y_5\im{m}\im{k} + y_6\im{m}\im{l} + y_7\im{k}\im{l} + y_8\im{m}\im{k}\im{l}\]
	for values $y_j\in\mR$ with $y_8\neq 0$. Therefore, considering these last five iterates in addition to the three iterates $Q_{p,\im{m}}(0)$, $Q_{p,\im{k}}(0)$ and $Q_{p,\im{l}}(0)$, we see that there exists at least eight linearly independent vectors in $M$, meaning that $\dim(M)\geq 8$.
	
	\item When $p$ is odd, consider $c_0 = \im{m} + \im{k} + \im{l}$. Using Lemma \ref{lemCharacterization}, It can be computed that
	\[Q^2_{p,c_0} = u_1\im{m} + u_2\im{k} + u_3\im{l} + u_4\im{m}\im{k}\im{l}\]
	with $u_j\in\mR$ and $u_4\neq 0$. Therefore, we have found four linearly independent vectors in $M$, hence $\dim(M)\geq 4$.\qedhere
\end{enumerate}
\end{proof}

Using these lemmas, the subspace $\iter^p(\im{m},\im{k},\im{l})$ can now be characterized.

\begin{theorem}\label{theoCharacterization}
Let $\im{m},\im{k},\im{l}\in\mI(n)$ and $M=\iter^p(\im{m},\im{k},\im{l})$. We have that
\begin{enumerate}
	\item if $p$ is even and $\im{m} = 1$ or $\im{k} \im{l} = \pm \im{m}$, then $M = \mM (1,\im{k},\im{l})$;
	\item if $p$ is even but $\im{m} \neq 1$ and $\im{k}\im{l} \neq \pm\im{m}$, then $M = \mS(\im{m},\im{k},\im{l})$;
	\item if $p$ is odd, then $M = \mM (\im{m},\im{k},\im{l})$.
\end{enumerate}
\end{theorem}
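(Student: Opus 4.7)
The plan is to prove each of the three cases by a sandwich argument. In every case, write $T$ for the claimed target space on the right-hand side. I would show separately that $M \subseteq T$ and that $\dim T \le \dim M$; since $M$ is a subspace of $T$, the two bounds force $M = T$. Lemma \ref{lemDimensions} already supplies the lower bound on $\dim M$ (namely $4$, $8$, or $4$ in cases 1, 2, 3 respectively), so the real work is the inclusion $M \subseteq T$ plus a dimension count of $T$.

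For the inclusion, in each case I would argue by induction on the iterate index $m$ that $Q_{p,c}^m(0) \in T$ for every $c \in \mT(\im{m},\im{k},\im{l})$. The base case $m=1$ reduces to checking $\mT(\im{m},\im{k},\im{l}) \subseteq T$. This is immediate in Cases 2 and 3, because the generating set of $T$ literally contains $\im{m},\im{k},\im{l}$. In Case 1, it is immediate when $\im{m}=1$, and when $\im{k}\im{l}=\pm\im{m}$ one uses $\im{m}=\pm\im{k}\im{l}\in\mM(1,\im{k},\im{l})$ together with $\im{k},\im{l}\in\mM(1,\im{k},\im{l})$. The inductive step rewrites $Q_{p,c}^{m+1}(0)=(Q_{p,c}^m(0))^p+c$ and uses Lemma \ref{lemCharacterization}: closure under multiplication (hence under $p$-th powers) of $\mM(1,\im{k},\im{l})$ in Case 1, closure under multiplication of $\mS(\im{m},\im{k},\im{l})$ in Case 2, and the odd-power statement for $\mM(\im{m},\im{k},\im{l})$ in Case 3. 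Adding $c\in T$ preserves membership since $T$ is a vector space. Taking the $\mR$-span over all iterates and all $c$ gives $M \subseteq T$.

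For the dimension matching, I would note that the generators of each $T$ are distinct elements of $\mI(n)$ and hence $\mR$-linearly independent by construction of the multicomplex space, so $\dim \mM(\im{m},\im{k},\im{l})=4$, $\dim \mM(1,\im{k},\im{l})=4$, and $\dim \mS(\im{m},\im{k},\im{l})=8$. Here I would verify that under the hypotheses of Case 2 (namely $\im{m}\neq 1$ and $\im{k}\im{l}\neq\pm\im{m}$) the eight listed generators of $\mS$ are genuinely distinct units in $\mI(n)$, and similarly for the other cases. Combined with Lemma \ref{lemDimensions}, each inclusion $M\subseteq T$ is forced to be an equality.

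The main subtlety will be the second subcase of Case 1, where $\im{m}\neq 1$ but $\im{k}\im{l}=\pm\im{m}$: one must notice that although the generating triples $(\im{m},\im{k},\im{l})$ and $(1,\im{k},\im{l})$ are formally different, the relation $\im{m}=\pm\im{k}\im{l}$ collapses them into the same ambient 4-dimensional space $\mM(1,\im{k},\im{l})$, so the closure hypothesis of Lemma \ref{lemCharacterization}(1) still applies to all iterates. The rest of the argument is a routine packaging of the two earlier lemmas.
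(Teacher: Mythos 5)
Your proposal is correct and follows essentially the same route as the paper: establish the inclusion $M\subseteq T$ by iterating with the closure properties from Lemma~\ref{lemCharacterization}, then combine the lower bound on $\dim M$ from Lemma~\ref{lemDimensions} with the obvious upper bound $\dim T = 4$ (or $8$) to force $M = T$. The paper likewise observes that $\mM(\im{m},\im{k},\im{l})=\mM(1,\im{k},\im{l})$ under the hypothesis $\im{k}\im{l}=\pm\im{m}$, which is the subtlety you flag in Case~1.
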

	
\begin{proof}
Let $M = \iter^p(\im{m},\im{k},\im{l})$.
\begin{enumerate}
	\item Suppose that $p$ is even. First, notice that $Q_{p,c}(0) = c \in\mM(\im{m},\im{k},\im{l})$ for all $c\in\mT(\im{m},\im{k},\im{l})$.	Moreover, if $\im{m} = 1$ or $\im{k} \im{l} = \pm \im{m}$, we have that $\mM(\im{m},\im{k},\im{l})=\mM (1 , \im{k} , \im{l} )$, which is closed under the addition and multiplication operations according to Lemma \ref{lemCharacterization}. Therefore, we can show that $Q_{p,c}^m (0) \in \mM (1, \im{k} , \im{l} )$ $\forall m \in \mN$, meaning that $M\subseteq \mM(1,\im{k},\im{l})$.
	
	Next, to prove that $M=\mM(1,\im{k},\im{l})$, we use some linear algebra concepts. From Lemma \ref{lemDimensions}, we know that $\dim(M)\geq 4$. Since we have that $M~\subseteq~\mM(1,\im{k},\im{l})$, we see that $\dim(M)\leq \dim\left(\mM(1,\im{k},\im{l})\right)=4$. Thus, it follows that $\dim(M) = 4 = \dim\left(\mM(1,\im{k},\im{l})\right)$. Since $M$ is a subspace of the vector space $\mM(1,\im{k},\im{l})$ and both spaces have the same finite dimension, we conclude that $M = \mM(1,\im{k},\im{l})$.
	
	\item Suppose that $p$ is even and $\im{m} \neq 1$, $\im{k} \im{l} \neq \pm \im{m}$. First, we see that $Q_{p,c}(0)\in \mS(\im{m},\im{k},\im{l})$. Then, we find that $Q^m_{p,c}(0)\in \mS(\im{m},\im{k},\im{l})\ \forall m\in\mN$ since, according to Lemma \ref{lemCharacterization}, $\mS(\im{m},\im{k},\im{l})$ is closed under the addition and the multiplication, meaning that $M\subseteq \mS(\im{m},\im{k},\im{l})$.
	
	It follows that $\dim(M) \leq \dim(\mS(\im{m},\im{k},\im{l}))$. From Lemma \ref{lemDimensions}, we know that $\dim(M) \geq 8 = \dim(\mS(\im{m},\im{k},\im{l}))$. Therefore, we find that $\dim(M) = \dim(\mS(\im{m},\im{k},\im{l}))$. Since $M\subseteq \mS(\im{m},\im{k},\im{l})$ and both spaces have the same finite dimension, we have that $M = \mS(\im{m},\im{k},\im{l})$.
	
	\item Suppose that $p$ is odd. Again, we have that $Q_{p,c}(0)\in\mM(\im{m},\im{k},\im{l})$. Moreover, we know from Lemma \ref{lemCharacterization} that $\mM(\im{m},\im{k},\im{l})$ is closed under the addition and $\eta^p\in\mM(\im{m},\im{k},\im{l})$ for all $\eta\in\mM(\im{m},\im{k},\im{l})$. Thus, we can show that $Q_{p,c}^m (0)\in\mM(\im{m},\im{k},\im{l})\ \forall m\in\mN$, hence $M\subseteq \mM(\im{m},\im{k},\im{l})$.
		
	This implies that $\dim(M) \leq \dim(\mM(\im{m},\im{k},\im{l})$. Again, we know from Lemma \ref{lemDimensions} that $\dim(M) \geq 4 = \dim(\mM(\im{m},\im{k},\im{l}))$. Consequently, we have that $\dim(M) = \dim(\mM(\im{m},\im{k},\im{l}))$. Since $M\subseteq \mM(\im{m},\im{k},\im{l})$, we conclude that $M = \mM(\im{m},\im{k},\im{l})$.\qedhere
\end{enumerate}
\end{proof}

\section{An Optimal Level of Generalization}

Even though there are many ways to choose $\im{m}$, $\im{k}$ and $\im{l}$, Theorem \ref{theoCharacterization} indicates that $\iter^p(\im{m},\im{k},\im{l})$, and consequently $\cT^p(\im{m},\im{k},\im{l})$, may only behave in a limited number of ways.

This is, basically, the reason why we are able to prove that any multicomplex principal 3D slice is equivalent to a tricomplex slice up to an affine transformation. Before presenting the complete proof, we explain here the summarized idea.

In short, we need to find units $\im{r},\im{q},\im{s}\in\mI(3)$ and a bijective linear application $\varphi:M_1\rightarrow M_2$ which is conform to the hypotheses of Definition \ref{defEquivRel}. Using Theorem \ref{theoCharacterization}, it is fairly easy to define an appropriate application $\varphi$ which depends on the hypotheses regarding $p$ and the units $\im{m}$, $\im{k}$ and $\im{l}$. However, the more arduous part is then to make sure that, in all three cases presented in Theorem \ref{theoCharacterization}, the equality $\varphi(\eta^p)=\varphi(\eta)^p$ holds $\forall\eta\in M_1$. Afterwards, it follows directly that
\begin{align*}
(\varphi \circ Q_{p,\varphi^{-1}(c)} \circ \varphi^{-1}) (\eta) = Q_{p, c} (\eta )
\end{align*}
for all $c\in\mT(\im{r},\im{q},\im{s})$ and for all $\eta \in M_2$, hence the result. We will see that there is one specific case where units $\im{r}$, $\im{q}$ and $\im{s}$ have to be quadricomplex. Still, in that case, the quadricomplex principal 3D slice can be obtained by applying an affine transformation on a tricomplex slice.

\begin{lemma}\label{lemEtap}
Let $\im{m},\im{k},\im{l}\in\mI(n)$ and $\im{r},\im{q},\im{s}\in\mI(n)$ such that $\im{m}^2=\im{r}^2$, $\im{k}^2=\im{q}^2$ and $\im{l}^2=\im{s}^2$. Moreover, consider $M_1 = \iter^p(\im{m},\im{k},\im{l})$ and $M_2 = \iter^p(\im{r},\im{q},\im{s})$.
\begin{enumerate}
	\item If $p$ is even and $\im{m} = 1$ or $\im{k}\im{l} = \pm \im{m}$, assume $\im{r} = 1$ or $\im{q}\im{s} = \pm\im{r}$ respectively. Then, since $M_1=\mM(1,\im{k},\im{l})$ and $M_2=\mM(1,\im{q},\im{s})$, we define
	\[\varphi(x_1 + x_2\im{k} + x_3\im{l} + x_4\im{k}\im{l}) = x_1 + x_2\im{q} + x_3\im{s} + x_4\im{q}\im{s}.\]

	\item If $p$ is even but $\im{m} \neq 1$ and $\im{k} \im{l} \neq \pm \im{m}$, assume $\im{r} \neq 1$ and $\im{q}\im{s} \neq \pm\im{r}$. Since $M_1 = \mS(\im{m},\im{k},\im{l})$ and $M_2 = \mS(\im{r},\im{q},\im{s})$, we can define
	\begin{align*}
	\varphi(x_1 + x_2\im{m} + x_3\im{k} + x_4\im{l} + x_5\im{m}\im{k} + x_6\im{m}\im{l} + x_7\im{k}\im{l} + x_8\im{m}\im{k}\im{l})&\\
	= x_1 + x_2\im{r} + x_3\im{q} + x_4\im{s} + x_5\im{r}\im{q} + x_6\im{r}\im{s} + x_7\im{q}\im{s} + x_8\im{r}\im{q}\im{s}.&
	\end{align*}

	\item If $p$ is odd, then $M_1 = \mM(\im{m},\im{k},\im{l})$ and $M_2 = \mM(\im{r},\im{q},\im{s})$. Thus, consider
	\[\varphi(x_1\im{m} + x_2\im{k} + x_3\im{l} + x_4\im{m}\im{k}\im{l}) = x_1\im{r} + x_2\im{q} + x_3\im{s} + x_4\im{r}\im{q}\im{s}.\]
\end{enumerate}
In each of these cases, $\varphi$ is a linear bijective application such that, for all $\eta\in M_1$, $\varphi(\eta^p) = \varphi(\eta)^p$.
\end{lemma}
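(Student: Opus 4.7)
The plan is to verify separately that $\varphi$ is linear, bijective, and preserves $p$-th powers. Linearity is built into the definition in every case, since $\varphi$ is specified on a spanning set of $M_1$ and extended linearly. For bijectivity, Theorem~\ref{theoCharacterization} identifies $M_1$ and $M_2$ as finite-dimensional vector spaces of matching dimension ($4$ in cases 1 and 3, $8$ in case 2), and $\varphi$ sends a basis of $M_1$ to a spanning set of $M_2$; Lemma~\ref{lemRankNullity} then promotes surjectivity to bijectivity.

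The key step is the identity $\varphi(\eta^p)=\varphi(\eta)^p$. The unifying approach is to work on the multiplicatively closed superspace $\mS(\im{m},\im{k},\im{l})$ and introduce the linear extension $\widetilde\varphi : \mS(\im{m},\im{k},\im{l}) \to \mS(\im{r},\im{q},\im{s})$ that sends each monomial $\im{m}^a\im{k}^b\im{l}^c$ with $a,b,c\in\{0,1\}$ to $\im{r}^a\im{q}^b\im{s}^c$. In case 2, one has $\mS(\im{m},\im{k},\im{l})=M_1$ and $\widetilde\varphi=\varphi$; in case 1, the degeneracies ($\im{m}=1$ or $\im{k}\im{l}=\pm\im{m}$) collapse $\mS(\im{m},\im{k},\im{l})$ back to $\mM(1,\im{k},\im{l})=M_1$ and again $\widetilde\varphi=\varphi$; in case 3, $\varphi$ is the restriction of $\widetilde\varphi$ to $\mM(\im{m},\im{k},\im{l})$. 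Once $\widetilde\varphi$ is shown to be multiplicative, a direct induction yields $\widetilde\varphi(\eta^p)=\widetilde\varphi(\eta)^p$, and in case 3 Lemma~\ref{lemCharacterization} ensures $\eta^p\in\mM(\im{m},\im{k},\im{l})$ for odd $p$, so the restriction of $\widetilde\varphi$ really captures $\varphi(\eta^p)$.

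It therefore remains to verify that $\widetilde\varphi$ is multiplicative, which by bilinearity reduces to products of monomials. By commutativity of the multicomplex product,
\[
\im{m}^{a_1}\im{k}^{b_1}\im{l}^{c_1}\cdot\im{m}^{a_2}\im{k}^{b_2}\im{l}^{c_2}
 = (\im{m}^2)^{s_1}(\im{k}^2)^{s_2}(\im{l}^2)^{s_3}\,\im{m}^{(a_1+a_2)\bmod 2}\im{k}^{(b_1+b_2)\bmod 2}\im{l}^{(c_1+c_2)\bmod 2},
\]
where $s_1,s_2,s_3\in\{0,1\}$ record how many squares must be absorbed, and the same identity runs in parallel on the $\im{r},\im{q},\im{s}$ side. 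The hypothesis $\im{m}^2=\im{r}^2$, $\im{k}^2=\im{q}^2$, $\im{l}^2=\im{s}^2$ forces the scalar prefactors (each in $\{\pm 1\}$) to match, and the residual monomials match by construction of $\widetilde\varphi$. The main obstacle is really this bookkeeping, together with the verification that $\widetilde\varphi$ is well-defined when monomials coincide in the degenerate cases (checking that any relation collapsing two monomials in $\mS(\im{m},\im{k},\im{l})$ collapses the two corresponding monomials in $\mS(\im{r},\im{q},\im{s})$, which is exactly what the matched-squares and matched-structural hypotheses guarantee). With multiplicativity in hand, the induction $\widetilde\varphi(\eta^p)=\widetilde\varphi(\eta)\,\widetilde\varphi(\eta^{p-1})=\widetilde\varphi(\eta)^p$ closes the argument.
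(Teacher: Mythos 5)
Your route through the auxiliary map $\widetilde\varphi$ on the multiplicatively closed superspace $\mS(\im{m},\im{k},\im{l})$ is genuinely different from the paper's. The paper verifies multiplicativity of $\varphi$ directly in cases 1 and 2 by expanding a general product, and in case 3 runs an explicit induction on odd $p$ (computing $\varphi(\eta^3)=\varphi(\eta)^3$ by brute force, then $\varphi(\eta^{p+2})=\varphi(\eta)^{p+2}$ via the decomposition $\eta^{p+2}=\eta^p\cdot\eta^2$), precisely because $\mM(\im{m},\im{k},\im{l})$ is not closed under multiplication. Your idea of lifting to $\mS$ where multiplicativity makes sense, proving it once on monomials, and restricting back, is cleaner in spirit and would subsume all three cases.

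However, there is a gap in case 3. For $\widetilde\varphi$ to be well-defined as a linear map you need: whenever two monomials $\im{m}^a\im{k}^b\im{l}^c$ coincide (up to sign), their images $\im{r}^a\im{q}^b\im{s}^c$ coincide with the same sign. You claim the ``matched-squares and matched-structural hypotheses'' guarantee this, but in case 3 (odd $p$) the lemma imposes no structural hypotheses at all --- only $\im{m}^2=\im{r}^2$, $\im{k}^2=\im{q}^2$, $\im{l}^2=\im{s}^2$. For example, take $p$ odd, $\im{m}=1$, $\im{k}=\ibasic{1}$, $\im{l}=\ibasic{2}$ on the source side and $\im{r}=\ibasic{1}\ibasic{2}$ (hyperbolic, so $\im{r}^2=1=\im{m}^2$), $\im{q}=\ibasic{1}$, $\im{s}=\ibasic{2}$ on the target side; these satisfy the hypotheses of the lemma. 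Then $\im{m}\im{k}=\im{k}$ in $\mS(\im{m},\im{k},\im{l})$, but $\widetilde\varphi(\im{m}\im{k})=\im{r}\im{q}\neq\im{q}=\widetilde\varphi(\im{k})$, so $\widetilde\varphi$ is ill-defined. The lemma's conclusion is nevertheless true here (one can check $\varphi((1+\ibasic{1})^3)=(\ibasic{1}\ibasic{2}+\ibasic{1})^3$ directly), because $\varphi$ itself is defined only on the genuine basis $\{\im{m},\im{k},\im{l},\im{m}\im{k}\im{l}\}$ of $\mM(\im{m},\im{k},\im{l})$, and the paper's formula for $\eta^p$ in that basis depends on the units only through their squares. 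So the paper's more pedestrian induction is actually the more robust argument: it never invokes the eight-monomial superspace and hence never hits the well-definedness obstruction. To salvage your approach you would either have to restrict to the non-degenerate situation and treat the degenerate subcase of case 3 separately, or replace $\widetilde\varphi$ by a purely formal substitution in the universal polynomial identity expressing $\eta^p$ in the basis of $\mM(\im{m},\im{k},\im{l})$, which is effectively what the paper does.
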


\begin{proof}
It can easily be seen that $\varphi$ is a linear bijective application in each case. Moreover, we know from Lemma \ref{lemCharacterization} that $\eta^p\in M_1$ for all $\eta\in M_1$, meaning that $\varphi(\eta^p)$ is always defined. We must now prove the equation $\varphi(\eta^p) = \varphi(\eta)^p$ holds $\forall \eta\in M_1$.
\begin{enumerate}
	\item In the first case, we can verify that $\varphi(\eta\cdot\zeta) = \varphi(\eta)\varphi(\zeta)\ \forall \eta,\zeta\in M_1$. Indeed, if
	\[\eta = x_1 + x_2\im{k} + x_3\im{l} + x_4\im{k}\im{l}\quad\text{and}\quad \zeta = y_1 + y_2\im{k} + y_3\im{l} + y_4\im{k}\im{l},\]
	then
	\[\varphi(\eta) = x_1 + x_2\im{q} + x_3\im{s} + x_4\im{q}\im{s}\quad\text{and}\quad \varphi(\zeta) = y_1 + y_2\im{q} + y_3\im{s} + y_4\im{q}\im{s}.\]
	Since $\im{k}^2=\im{q}^2$ and $\im{l}^2=\im{s}^2$,
	\begin{align*}
	\varphi(\eta\cdot\zeta) &= \varphi\Big(x_1y_1 + x_2y_2\im{k}^2 + x_3y_3\im{l}^2 + x_4y_4\im{k}^2\im{l}^2\\
	&\quad + (x_1y_2 + x_2y_1 + x_3y_4\im{l}^2 + x_4y_3\im{l}^2)\im{k}\\
	&\quad + (x_1y_3 + x_3y_1 + x_2y_4\im{k}^2 + x_4y_2\im{k}^2)\im{l}\\
	&\quad + (x_2y_3 + x_3y_2 + x_1y_4 + x_4y_1)\im{k}\im{l}\Big),\\
	&= x_1y_1 + x_2y_2\im{q}^2 + x_3y_3\im{s}^2 + x_4y_4\im{q}^2\im{s}^2\\
	&\quad + (x_1y_2 + x_2y_1 + x_3y_4\im{s}^2 + x_4y_3\im{s}^2)\im{q}\\
	&\quad + (x_1y_3 + x_3y_1 + x_2y_4\im{q}^2 + x_4y_2\im{q}^2)\im{s}\\
	&\quad + (x_2y_3 + x_3y_2 + x_1y_4 + x_4y_1)\im{q}\im{s},\\
	&= \varphi(\eta)\varphi(\zeta).
	\end{align*}
	Consequently, we find that $\varphi(\eta^p)=\varphi(\eta)^p$.
	
	\item The second case is analogous to the first. Again, we verify, with more laborious but very similar calculations, that $\varphi(\eta\cdot\zeta) = \varphi(\eta)\varphi(\zeta)$ for all $\eta,\zeta\in M_1$, hence $\varphi(\eta^p)=\varphi(\eta)^p$.
	
	\item The last case cannot be treated like the two first ones since the product of two numbers $\eta,\zeta\in M_1$ is not necessarily in $M_1$, meaning that $\varphi(\eta\cdot\zeta)$ is not always defined. Nevertheless, using the induction principle on odd integers $p\geq 3$, the same conclusion can be found.
	
	Let $\eta\in M_1$ such that $\eta = x_1\im{m} + x_2\im{k} + x_3\im{l} + x_4\im{m}\im{k}\im{l}$. Then, $\varphi(\eta) = x_1\im{r} + x_2\im{q} + x_3\im{s} + x_4\im{r}\im{q}\im{s}$. When $p = 3$, since $\im{m}^2=\im{r}^2$, $\im{k}^2=\im{q}^2$ and $\im{l}^2=\im{s}^2$, we find that
	\begin{align*}
	\varphi(\eta^3) &= \varphi\Big((x_1\im{m} + x_2\im{k} + x_3\im{l} + x_4\im{m}\im{k}\im{l})^3\Big),\\
	&= \varphi\Big((x_1^3\im{m}^2 + 3x_1x_2^2\im{k}^2 + 3x_1x_3^2\im{l}^2 + 3x_1x_4^2\im{m}^2\im{k}^2\im{l}^2 + 6x_2x_3x_4\im{k}^2\im{l}^2)\im{m}\\
	&\quad + (x_2^3\im{k}^2 + 3x_1^2x_2\im{m}^2 + 3x_2x_3^2\im{l}^2 + 3x_2x_4^2\im{m}^2\im{k}^2\im{l}^2 + 6x_1x_3x_4\im{m}^2\im{l}^2)\im{k}\\
	&\quad + (x_3^3\im{l}^2 + 3x_1^2x_3\im{m}^2 + 3x_2^2x_3\im{k}^2 + 3x_3x_4^2\im{m}^2\im{k}^2\im{l}^2 + 6x_1x_2x_4\im{m}^2\im{k}^2)\im{l}\\
	&\quad + (x_4^3\im{m}^2\im{k}^2\im{l}^2 + 3x_1^2x_4\im{m}^2 + 3x_2^2x_4\im{k}^2 + 3x_3^2x_4\im{l}^2 + 6x_1x_2x_3)\im{m}\im{k}\im{l}\Big), \displaybreak[0]\\
	&= (x_1^3\im{r}^2 + 3x_1x_2^2\im{q}^2 + 3x_1x_3^2\im{s}^2 + 3x_1x_4^2\im{r}^2\im{q}^2\im{s}^2 + 6x_2x_3x_4\im{q}^2\im{s}^2)\im{r}\\
	&\quad + (x_2^3\im{q}^2 + 3x_1^2x_2\im{r}^2 + 3x_2x_3^2\im{s}^2 + 3x_2x_4^2\im{r}^2\im{q}^2\im{s}^2 + 6x_1x_3x_4\im{r}^2\im{s}^2)\im{q}\\
	&\quad + (x_3^3\im{s}^2 + 3x_1^2x_3\im{r}^2 + 3x_2^2x_3\im{q}^2 + 3x_3x_4^2\im{r}^2\im{q}^2\im{s}^2 + 6x_1x_2x_4\im{r}^2\im{q}^2)\im{s}\\
	&\quad + (x_4^3\im{r}^2\im{q}^2\im{s}^2 + 3x_1^2x_4\im{r}^2 + 3x_2^2x_4\im{q}^2 + 3x_3^2x_4\im{s}^2 + 6x_1x_2x_3)\im{r}\im{q}\im{s},\\
	&= \varphi(\eta)^3.
	\end{align*}
	Now, suppose that $\varphi(\eta^p) = \varphi(\eta)^p$ for some odd integer $p\geq 3$. Consider that
	\[\eta^p = y_1\im{m} + y_2\im{k} + y_3\im{l} + y_4\im{m}\im{k}\im{l}.\]
	Then,
	\[\varphi(\eta^p) = y_1\im{r} + y_2\im{q} + y_3\im{s} + y_4\im{r}\im{q}\im{s}.\]
	We can calculate that
	\begin{align*}
	\eta^{p+2} &= \eta^p\cdot\Big((x_1^2\im{m}^2 + x_2^2\im{k}^2 + x_3^2\im{l}^2 + x_4^2\im{m}^2\im{k}^2\im{l}^2) + (2x_1x_2 + 2x_3x_4\im{l}^2)\im{m}\im{k}\\
	&\quad + (2x_1x_3 + 2x_2x_4\im{k}^2)\im{m}\im{l} + (2x_2x_3 + 2x_1x_4\im{m}^2)\im{k}\im{l}\Big), \displaybreak[0]\\
	&= \Big(y_1(x_1^2\im{m}^2 + x_2^2\im{k}^2 + x_3^2\im{l}^2 + x_4^2\im{m}^2\im{k}^2\im{l}^2) + y_2(2x_1x_2 + 2x_3x_4\im{l}^2)\im{k}^2\\
	&\quad + y_3(2x_1x_3 + 2x_2x_4\im{k}^2)\im{l}^2 + y_4(2x_2x_3 + 2x_1x_4\im{m}^2)\im{k}^2\im{l}^2\Big)\im{m}\\
	&\quad + \Big(y_2(x_1^2\im{m}^2 + x_2^2\im{k}^2 + x_3^2\im{l}^2 + x_4^2\im{m}^2\im{k}^2\im{l}^2) + y_1(2x_1x_2 + 2x_3x_4\im{l}^2)\im{m}^2\\
	&\quad + y_4(2x_1x_3 + 2x_2x_4\im{k}^2)\im{m}^2\im{l}^2 + y_3(2x_2x_3 + 2x_1x_4\im{m}^2)\im{l}^2\Big)\im{k}\\
	&\quad + \Big(y_3(x_1^2\im{m}^2 + x_2^2\im{k}^2 + x_3^2\im{l}^2 + x_4^2\im{m}^2\im{k}^2\im{l}^2) + y_4(2x_1x_2 + 2x_3x_4\im{l}^2)\im{m}^2\im{k}^2\\
	&\quad + y_1(2x_1x_3 + 2x_2x_4\im{k}^2)\im{m}^2 + y_2(2x_2x_3 + 2x_1x_4\im{m}^2)\im{k}^2\Big)\im{l}\\
	&\quad + \Big(y_4(x_1^2\im{m}^2 + x_2^2\im{k}^2 + x_3^2\im{l}^2 + x_4^2\im{m}^2\im{k}^2\im{l}^2) + y_3(2x_1x_2 + 2x_3x_4\im{l}^2)\\
	&\quad + y_2(2x_1x_3 + 2x_2x_4\im{k}^2) + y_1(2x_2x_3 + 2x_1x_4\im{m}^2)\Big)\im{m}\im{k}\im{l}.
	\end{align*}
	Therefore, since $\im{m}^2=\im{r}^2$, $\im{k}^2=\im{q}^2$ and $\im{l}^2=\im{s}^2$,
	\begin{align*}
	\varphi(\eta^{p+2}) &= \Big(y_1(x_1^2\im{r}^2 + x_2^2\im{q}^2 + x_3^2\im{s}^2 + x_4^2\im{r}^2\im{q}^2\im{s}^2) + y_2(2x_1x_2 + 2x_3x_4\im{s}^2)\im{q}^2\\
	&\quad + y_3(2x_1x_3 + 2x_2x_4\im{q}^2)\im{s}^2 + y_4(2x_2x_3 + 2x_1x_4\im{r}^2)\im{q}^2\im{s}^2\Big)\im{r}\\
	&\quad + \Big(y_2(x_1^2\im{r}^2 + x_2^2\im{q}^2 + x_3^2\im{s}^2 + x_4^2\im{r}^2\im{q}^2\im{s}^2) + y_1(2x_1x_2 + 2x_3x_4\im{s}^2)\im{r}^2\\
	&\quad + y_4(2x_1x_3 + 2x_2x_4\im{q}^2)\im{r}^2\im{s}^2 + y_3(2x_2x_3 + 2x_1x_4\im{r}^2)\im{s}^2\Big)\im{q}\\
	&\quad + \Big(y_3(x_1^2\im{r}^2 + x_2^2\im{q}^2 + x_3^2\im{s}^2 + x_4^2\im{r}^2\im{q}^2\im{s}^2) + y_4(2x_1x_2 + 2x_3x_4\im{s}^2)\im{r}^2\im{q}^2\\
	&\quad + y_1(2x_1x_3 + 2x_2x_4\im{q}^2)\im{r}^2 + y_2(2x_2x_3 + 2x_1x_4\im{r}^2)\im{q}^2\Big)\im{s}\\
	&\quad + \Big(y_4(x_1^2\im{r}^2 + x_2^2\im{q}^2 + x_3^2\im{s}^2 + x_4^2\im{r}^2\im{q}^2\im{s}^2) + y_3(2x_1x_2 + 2x_3x_4\im{s}^2)\\
	&\quad + y_2(2x_1x_3 + 2x_2x_4\im{q}^2) + y_1(2x_2x_3 + 2x_1x_4\im{r}^2)\Big)\im{r}\im{q}\im{s}, \displaybreak[0] \\
	&= \varphi(\eta^p)\cdot\Big((x_1^2\im{r}^2 + x_2^2\im{q}^2 + x_3^2\im{s}^2 + x_4^2\im{r}^2\im{q}^2\im{s}^2) + (2x_1x_2 + 2x_3x_4\im{s}^2)\im{r}\im{q}\\
	&\quad + (2x_1x_3 + 2x_2x_4\im{q}^2)\im{r}\im{s} + (2x_2x_3 + 2x_1x_4\im{r}^2)\im{q}\im{s}\Big),\\
	&= \varphi(\eta)^p\cdot\varphi(\eta)^2 = \varphi(\eta)^{p+2}.
	\end{align*}
	Hence, when $p\geq 3$ is odd, the equation $\varphi(\eta^p)=\varphi(\eta)^p$ holds $\forall\eta\in M_1$.
\end{enumerate}
Therefore, we conclude that the equality $\varphi(\eta^p)=\varphi(\eta)^p$ is valid in the three cases presented.
\end{proof}

Using this lemma, we would like to show that, whichever multicomplex slice $\cT^p_1(\im{m},\im{k},\im{l})$ we consider, there exists a tricomplex slice $\cT^p_2(\im{r},\im{q},\im{s})$ with the same dynamics. However, there is one marginal case. Suppose $p$ is even and consider $\im{m} = \ibasic{1}\ibasic{2}$, $\im{k} = \ibasic{1}\ibasic{3}$ and $\im{l} = \ibasic{1}\ibasic{4}$ where $\ibasic{1}$, $\ibasic{2}$, $\ibasic{3}$ and $\ibasic{4}$ are four basic complex imaginary units. We see that all three units are hyperbolic but $\im{k}\im{l}\neq\pm\im{m}$. In $\mI(3)$, there are only three hyperbolic units and they are such that $\im{k}\im{l} = \pm\im{m}$, meaning that the dynamics of $\cT^p(\ibasic{1}\ibasic{2},\ibasic{1}\ibasic{3},\ibasic{1}\ibasic{4})$ cannot be replicated in $\mM(3)$. Notice that this is only a problem when $p$ is even since, as seen in the previous results, the relation between $\im{k}\im{l}$ and $\im{m}$ does not have an impact the dynamics of a slice when $p$ is odd.

Nonetheless, we can show that this particular slice, when represented in the tridimensional space, is an octahedron just like the Perplexbrot (also called the Airbrot) $\cT^p(1,\ibasic{1}\ibasic{2},\ibasic{1}\ibasic{3})$ found in the tricomplex space \cite{GarantPelletier, GarantRochon, RochonRansfordParise}.

\begin{lemma}\label{lemOcta}
Let $\im{m} = \ibasic{1}\ibasic{2}$, $\im{k} = \ibasic{1}\ibasic{3}$ and $\im{l} = \ibasic{1}\ibasic{4}$ where $\ibasic{1}$, $\ibasic{2}$, $\ibasic{3}$ and $\ibasic{4}$ are four basic complex imaginary units. The principal 3D slice $\cT^p(\im{m},\im{k},\im{l})$ is the following regular octahedron:
\[\cT^p(\im{m},\im{k},\im{l}) = \left\lbrace c_1\ibasic{1}\ibasic{2} + c_2\ibasic{1}\ibasic{3} + c_3\ibasic{1}\ibasic{4}\ :\ |c_1| + |c_2| + |c_3| \leq \frac{p-1}{p^{p/(p-1)}} \right\rbrace.\]
\end{lemma}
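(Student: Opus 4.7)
The plan is to adapt to the quadricomplex setting the idempotent-decomposition argument that characterizes the tricomplex Perplexbrot. Throughout we assume $p$ is even, as per the preceding discussion. Since $\im{m}\neq 1$ and $\im{k}\im{l}=-\ibasic{3}\ibasic{4}\neq\pm\im{m}$, Theorem \ref{theoCharacterization} identifies $\iter^p(\im{m},\im{k},\im{l})$ with $\mS(\im{m},\im{k},\im{l})$, an $8$-dimensional real commutative subalgebra generated by three commuting hyperbolic units. The key observation is that any such subalgebra splits as a direct product of eight copies of $\mR$ via a complete system of orthogonal idempotents, which reduces the multicomplex iteration $Q_{p,c}^m(0)$ to eight independent real iterations of $x\mapsto x^p+c$.

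Concretely, we introduce
\[\alpha_{\epsilon}:=\frac{(1+\epsilon_1\im{m})(1+\epsilon_2\im{k})(1+\epsilon_3\im{l})}{8},\qquad \epsilon=(\epsilon_1,\epsilon_2,\epsilon_3)\in\{-1,+1\}^3.\]
Using $\im{m}^2=\im{k}^2=\im{l}^2=1$, commutativity, and the identity $\frac{1+\epsilon e}{2}\cdot\frac{1-\epsilon e}{2}=0$ for any hyperbolic $e$, a routine verification shows that the $\alpha_\epsilon$ are pairwise orthogonal idempotents summing to $1$. From $\im{m}\alpha_\epsilon=\epsilon_1\alpha_\epsilon$ (and the analogous identities for $\im{k},\im{l}$) one reads off
\[c=c_1\im{m}+c_2\im{k}+c_3\im{l}=\sum_\epsilon c_\epsilon\,\alpha_\epsilon,\qquad c_\epsilon:=\epsilon_1 c_1+\epsilon_2 c_2+\epsilon_3 c_3\in\mR.\]
Orthogonality gives $\bigl(\sum_\epsilon x_\epsilon\alpha_\epsilon\bigr)^p=\sum_\epsilon x_\epsilon^p\alpha_\epsilon$ for real coefficients $x_\epsilon$, so an induction on $m$ yields
\[Q_{p,c}^m(0)=\sum_\epsilon Q_{p,c_\epsilon}^m(0)\,\alpha_\epsilon.\]
Since the $\alpha_\epsilon$ are linearly independent, the multicomplex orbit is bounded if and only if each of the eight real orbits is, i.e., $c\in\cT^p(\im{m},\im{k},\im{l})$ iff $c_\epsilon\in\cM^p\cap\mR$ for every $\epsilon\in\{-1,+1\}^3$.

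The remaining and most delicate step is to convert these eight real constraints into the single symmetric octahedral inequality. For even $p$, a standard one-dimensional analysis of $x\mapsto x^p+c$ gives $\cM^p\cap\mR=[-2^{1/(p-1)},\,(p-1)/p^{p/(p-1)}]$: the positive endpoint follows from the tangency condition $px^{p-1}=1$ at $x=p^{-1/(p-1)}$, and the negative endpoint is the value of $c$ for which the critical orbit is sent directly onto the positive fixed point. The elementary inequality $(p-1)^{p-1}\leq 2p^p$ for $p\geq 2$ then confirms that the positive endpoint $(p-1)/p^{p/(p-1)}$ is the binding bound. Since $\{c_\epsilon\}_\epsilon$ is stable under $\epsilon\mapsto -\epsilon$, the eight constraints $c_\epsilon\in\cM^p\cap\mR$ collapse to $\max_\epsilon|c_\epsilon|\leq(p-1)/p^{p/(p-1)}$, and $\max_\epsilon|\epsilon_1 c_1+\epsilon_2 c_2+\epsilon_3 c_3|=|c_1|+|c_2|+|c_3|$, which is exactly the stated regular octahedron. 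The principal obstacle is the real-dynamics statement about $\cM^p\cap\mR$; once that is granted, the multicomplex structural part reduces to bookkeeping with the eight idempotents.
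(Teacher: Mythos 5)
Your proof is correct and follows essentially the same strategy as the paper's: decompose $c$ into eight real idempotent coefficients that are exactly the signed sums $\pm c_1\pm c_2\pm c_3$, reduce boundedness of the multicomplex orbit to membership of each such sum in $\cM^p\cap\mR=[-2^{1/(p-1)},(p-1)/p^{p/(p-1)}]$, then use the sign-flip symmetry of the coefficient set together with the comparison $(p-1)/p^{p/(p-1)}\leq 2^{1/(p-1)}$ to collapse the eight intervals to the single octahedral inequality. The only difference is bookkeeping: you build the eight orthogonal idempotents $\alpha_\epsilon$ directly from the three commuting hyperbolic units inside the subalgebra $\mS(\im{m},\im{k},\im{l})$, whereas the paper invokes the ambient idempotent representation of $\mM(4)$ via the canonical $\gm{h}$ and applies Corollary~\ref{coroMultibrotIdem}; one can check these yield the same system of minimal idempotents (up to relabeling of the index $\epsilon$), so the underlying argument is identical.
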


\begin{proof}
Consider $c = c_1\ibasic{1}\ibasic{2} + c_2\ibasic{1}\ibasic{3} + c_3\ibasic{1}\ibasic{4}$. Using the idempotent representation (see Subsection \ref{secIdempotent}), we can express $c$ as
\begin{align*}
c &= (c_1 - c_2 + c_3)\gm{1}\gm{2}\gm{3} + (- c_1 + c_2 - c_3)\gmc{1}\gm{2}\gm{3},\\
&\quad + (c_1 + c_2 - c_3)\gm{1}\gmc{2}\gm{3} + (- c_1 - c_2 + c_3)\gmc{1}\gmc{2}\gm{3},\\
&\quad + (c_1 - c_2 - c_3)\gm{1}\gm{2}\gmc{3} + (- c_1 + c_2 + c_3)\gmc{1}\gm{2}\gmc{3},\\
&\quad + (c_1 + c_2 + c_3)\gm{1}\gmc{2}\gmc{3} + (- c_1 - c_2 - c_3)\gmc{1}\gmc{2}\gmc{3}.
\end{align*}
Thus, we know from Corollary \ref{coroMultibrotIdem} that
\[c\in\cT^p(\ibasic{1}\ibasic{2},\ibasic{1}\ibasic{3},\ibasic{1}\ibasic{4})\Leftrightarrow c\in\cM^p_4 \Leftrightarrow \pm c_1 \pm c_2 \pm c_3 \in \cM^p.\]
In addition, we know from \cite{RochonRansfordParise} the intersection between a Multibrot set and the real axis:
\[\cM^p\cap\mR = \left[ -2^{1 / (p-1)}, \frac{p-1}{p^{p/(p-1)}}\right].\]
Hence, we see that
\begin{align*}
\pm c_1 \pm c_2 \pm c_3 \in \cM^p &\Leftrightarrow -2^{1 / (p-1)} \leq \pm c_1 \pm c_2 \pm c_3 \leq \frac{p-1}{p^{p/(p-1)}},\\
&\Leftrightarrow |c_1| + |c_2| + |c_3| \leq \min\left\lbrace 2^{1/(p-1)}, \frac{p-1}{p^{p/(p-1)}} \right\rbrace.
\end{align*}
Finally, we could verify that $\frac{p-1}{p^{p/(p-1)}} \leq 2^{1/(p-1)}$  for all integer $p\geq 2$, hence the result.
\end{proof}

Even though their size and center are different, since they have similar geometric shapes, we know that $\cT^p(\ibasic{1}\ibasic{2},\ibasic{1}\ibasic{3},\ibasic{1}\ibasic{4})$ and the Perplexbrot (Airbrot) are related to each other through an affine transformation. With this result, we are now ready to present the main theorem.

\begin{theorem}\label{theoTricomplexSlice}
Let $\cT_1^p(\im{m},\im{k},\im{l})$ be a principal 3D slice of $\cM_n^p$. There always exists a tricomplex principal 3D slice $\cT_2^p(\im{r},\im{q},\im{s})$ such that $\cT_1^p\sim\cT_2^p$ up to an affine transformation.
\end{theorem}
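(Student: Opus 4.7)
The plan is to exploit Theorem \ref{theoCharacterization}, which restricts $\iter^p(\im{m},\im{k},\im{l})$ to only three possible shapes, together with Lemma \ref{lemEtap}, which supplies a ready-made linear bijection $\varphi$ whenever the target tricomplex triple $\im{r},\im{q},\im{s}$ has the same squares as $\im{m},\im{k},\im{l}$ and falls under the same clause of the trichotomy. Once $\varphi$ is in hand, the conjugation identity $(\varphi\circ Q_{p,\varphi^{-1}(c)}\circ\varphi^{-1})(\eta)=Q_{p,c}(\eta)$ follows directly for every $c\in\mT(\im{r},\im{q},\im{s})$ and every $\eta\in M_2$, so the bulk of the work reduces to a combinatorial verification that the desired $\im{r},\im{q},\im{s}\in\mI(3)$ exist.

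First I would split into cases following Theorem \ref{theoCharacterization}. When $p$ is odd, each of $\im{m},\im{k},\im{l}$ is either complex or hyperbolic, and since $\mI(3)$ contains four complex units ($\ibasic{1},\ibasic{2},\ibasic{3},\ibasic{1}\ibasic{2}\ibasic{3}$) and three hyperbolic units ($\ibasic{1}\ibasic{2},\ibasic{1}\ibasic{3},\ibasic{2}\ibasic{3}$), a type-matching triple is easily produced and Lemma \ref{lemEtap}.3 closes the case. When $p$ is even with $\im{m}=1$ or $\im{k}\im{l}=\pm\im{m}$, I would arrange the tricomplex triple so that $\im{r}=1$ or $\im{q}\im{s}=\pm\im{r}$ respectively, which is trivially achievable in $\mI(3)$, and then invoke Lemma \ref{lemEtap}.1. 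When $p$ is even with $\im{m}\neq 1$ and $\im{k}\im{l}\neq\pm\im{m}$, a short case analysis by the types of $\im{m},\im{k},\im{l}$ produces a suitable tricomplex triple in every sub-configuration except one, using the elementary facts that the product of a complex and a hyperbolic unit is complex, while the product of two hyperbolic units is hyperbolic.

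The single obstacle is therefore the configuration in which $p$ is even and $\im{m},\im{k},\im{l}$ are all hyperbolic with $\im{k}\im{l}\neq\pm\im{m}$: the three hyperbolic units of $\mI(3)$ satisfy $\im{q}\im{s}=\pm\im{r}$ for every pairing, so no tricomplex triple fits the hypothesis of Lemma \ref{lemEtap}.2. To handle this situation I would appeal to Lemma \ref{lemOcta}, which identifies $\cT_1^p(\im{m},\im{k},\im{l})$ with a regular octahedron centred at the origin, and then observe that the tricomplex Perplexbrot $\cT_2^p(1,\ibasic{1}\ibasic{2},\ibasic{1}\ibasic{3})$ is the same combinatorial object, a regular octahedron of a different size and centre. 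A composition of a translation and a uniform scaling therefore supplies the announced affine transformation between $\cT_1^p$ and the tricomplex slice $\cT_2^p$. Aggregating the three cases proves the theorem.
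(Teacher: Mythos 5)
Your proposal follows the paper's proof essentially step for step: split according to the trichotomy of Theorem \ref{theoCharacterization}, pick a tricomplex triple $\im{r},\im{q},\im{s}$ with matching squares and matching clause, invoke Lemma \ref{lemEtap} to obtain the conjugating $\varphi$, and handle the one irreducible exception (all three units hyperbolic with $p$ even and $\im{k}\im{l}\neq\pm\im{m}$) by passing to $\mI(4)$ and using Lemma \ref{lemOcta} to match it with the tricomplex Perplexbrot via an affine map. The argument is correct and is the same as the paper's.
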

	
\begin{proof}
Let $\im{r},\im{q},\im{s}\in\mI(3)$ such that $\im{m}^2=\im{r}^2$, $\im{k}^2=\im{q}^2$ and $\im{l}^2=\im{s}^2$. Since there are at least three complex imaginary and three hyperbolic units in $\mI(3)$, it is obvious that such units exist. In addition,
\begin{enumerate}
	\item if $p$ is even and $\im{m} = 1$ or $\im{k}\im{l} = \pm \im{m}$, assume $\im{r} = 1$ or $\im{q}\im{s} = \pm\im{r}$ respectively;
	\item if $p$ is even but $\im{m} \neq 1$ and $\im{k} \im{l} \neq \pm \im{m}$, assume $\im{r} \neq 1$ and $\im{q}\im{s} \neq \pm\im{r}$;
	\item if $p$ is odd, no additional hypothesis is necessary.
\end{enumerate}

These three cases are possible in $\mI(3)$ except for the case 2 when all three units $\im{m}$, $\im{k}$ and $\im{l}$ are hyperbolic. Indeed, there are only three hyperbolic units in $\mI(3)$ and they are such that $\im{q}\im{s}=\pm\im{r}$. Thus, only in this particular case, we have that $\im{r},\im{q},\im{s}\in\mI(4)$ and, more specifically, we can consider the three units $\im{r} = \ibasic{1}\ibasic{2}$, $\im{q} = \ibasic{1}\ibasic{3}$ and $\im{s} = \ibasic{1}\ibasic{4}$.

Consider the bijective linear application $\varphi$ defined in Lemma \ref{lemEtap} which respects the previous hypotheses. Thus, we have that $\varphi(\eta^p)=\varphi(\eta)^p$ for all $\eta\in M_1$ and $\varphi\left(\mT(\im{m},\im{k},\im{l})\right) = \left(\mT(\im{r},\im{q},\im{s})\right)$. Since $\varphi$ is linear, for all $c\in\mT(\im{m},\im{k},\im{l})$, we then conclude that
\begin{gather*}
\left(\varphi\circ Q_{p,c}\right)(\eta) = \varphi\left(\eta^p + c\right) = \varphi(\eta)^p + \varphi(c) = \left(Q_{p,\varphi(c)}\circ\varphi\right)(\eta)\ \forall \eta\in M_1,\\
\Leftrightarrow \left(\varphi\circ Q_{p,c}\circ\varphi^{-1}\right)(\eta) = Q_{p,\varphi(c)}(\eta)\ \forall \eta\in M_2.
\end{gather*}
meaning that $\cT_1^p(\im{m},\im{k},\im{l})\sim\cT_2^p(\im{r},\im{q},\im{s})$ according to Definition \ref{defEquivRel}.

In most cases, this completes the proof since $\cT_2^p(\im{r},\im{q},\im{s})$ is a tricomplex principal 3D slice. However, in the particular case when $p$ is even, all three units $\im{m}$, $\im{k}$ and $\im{l}$ are hyperbolic and $\im{k}\im{l}\neq \pm\im{m}$, we have that $\im{r}$, $\im{q}$ and $\im{s}$ are quadricomplex. Nonetheless, we know that $\cT_1^p$ is equivalent to the quadricomplex slice $\cT_2^p(\ibasic{1}\ibasic{2},\ibasic{1}\ibasic{3},\ibasic{1}\ibasic{4})$ which is a regular octahedron according to Lemma \ref{lemOcta}. Since the tricomplex Perplexbrot (Airbrot) is also a regular octahedron, it is possible to apply an affine transformation on it to obtain the slice $\cT_2^p$.
\end{proof}

\section*{Conclusion}

In this article, we first presented a brief overview of the multicomplex numbers. Then, we saw that complex fractals, such as the Multibrot sets, may be extended to the multicomplex space. Since these objects have over three dimensions, it follows that, to visualize them, we have to choose three specific dimensions at a time, hence the concept of 3D slices.

It is rather interesting to explore those 3D slices. However, this exploration may seem endless since the $n$-complex space can always be generalized one step further to the $(n+1)$-complex space. Nonetheless, when considering the visualization of principal 3D slices of these fractals, Theorem \ref{theoTricomplexSlice} tells us that there is no need to generalize beyond the tricomplex space. In this context, it is optimal.

In future works, it will therefore be possible to look into Multibrot principal slices specifically in the tricomplex case. In the specific case of the Metatronbrot $\cM_3^2$, we know already from \cite{Parise} and \cite{RochonParise} that there are only eight principal 3D slices: the Tetrabrot, the Arrowheadbrot, the Hourglassbrot, the Airbrot, the Firebrot, the Mousebrot, the Metabrot and the Turtlebrot (see Figure \ref{slices}). Hence, for $p=2$, these are the only principal 3D slices of the Mandelbrot set generalized to the multicomplex space.

\begin{figure}
	\centering
	\includegraphics[width=\linewidth]{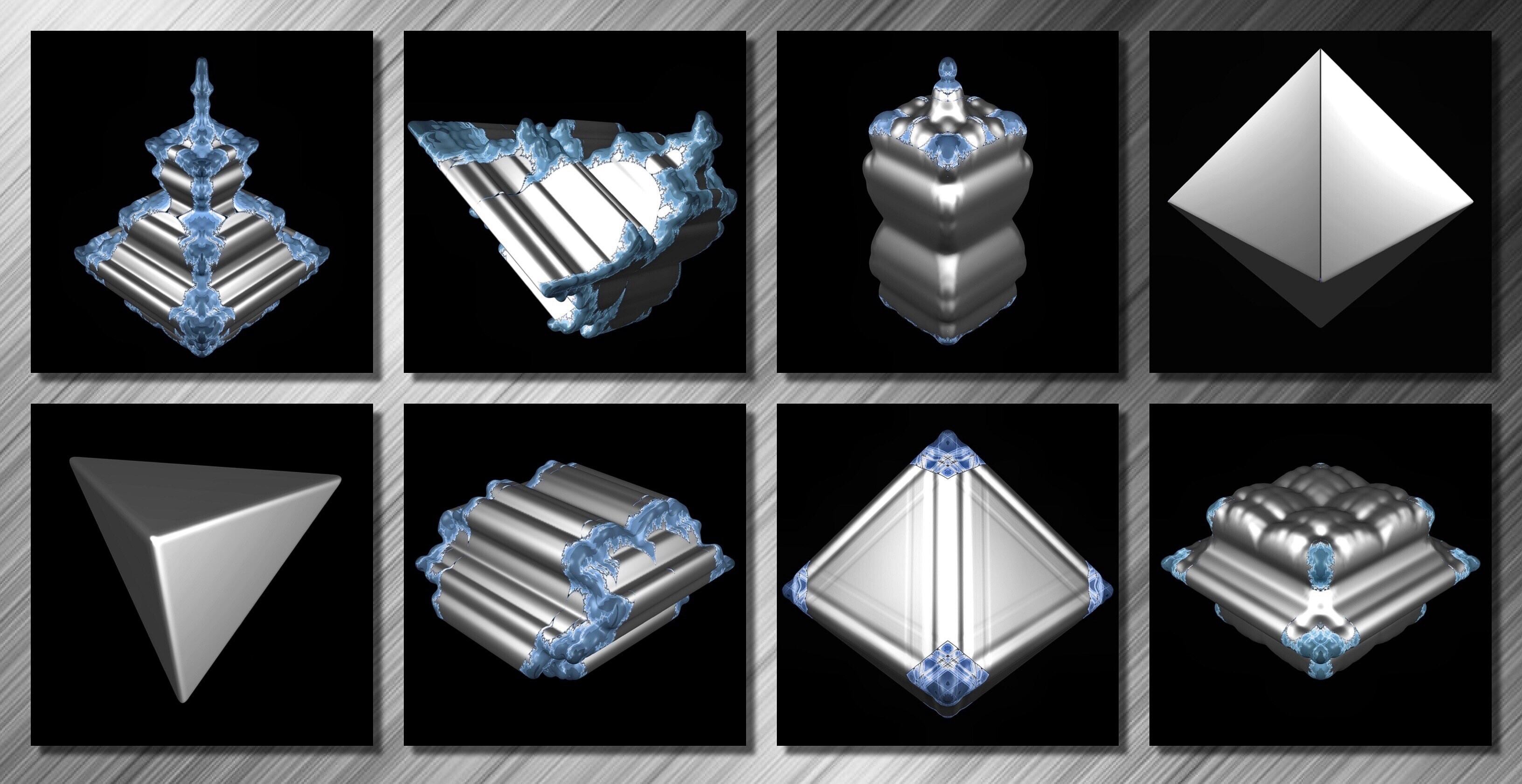}
	\caption{The eight principal 3D slices of the Metatronbrot $\cM_3^2$ representing the equivalence classes in the tricomplex case.}\label{slices}
\end{figure}

\section*{Acknowledgement}
 
DR is grateful to the Natural Sciences and Engineering Research Council of Canada (NSERC) for financial support. GB would like to thank the FRQNT and the ISM for the awards of graduate research grants. The authors are grateful to Louis Hamel and Étienne Beaulac, from UQTR, for their useful work on the MetatronBrot Explorer in Java.

\bibliographystyle{abbrv}
\bibliography{ArticleMulticomplex_Final_v04}

\end{document}